\documentclass[reqno]{amsart}

\calclayout

\usepackage[unicode]{hyperref}
\usepackage[lite]{amsrefs}
\usepackage{amsmath,amssymb}

\usepackage{graphicx}
\usepackage{subcaption}		
\usepackage{xspace}

\def\paragraph{\subsection}
\makeatletter
\def\@sect#1#2#3#4#5#6[#7]#8{%
  \edef\@toclevel{\ifnum#2=\@m 0\else\number#2\fi}%
  \ifnum #2>\c@secnumdepth \let\@secnumber\@empty
  \else \@xp\let\@xp\@secnumber\csname the#1\endcsname\fi
  \@tempskipa #5\relax
  \ifnum #2>\c@secnumdepth
    \let\@svsec\@empty
  \else
    \refstepcounter{#1}%
    \edef\@secnumpunct{%
      \ifdim\@tempskipa>\z@ 
        \@ifnotempty{#8}{.\@nx\enspace}%
      \else
        \@ifempty{#8}{.}{.\@nx\enspace}%
      \fi
    }%
      \ifnum #2=\tw@ \def\@secnumfont{\bfseries}\fi{}%
    \protected@edef\@svsec{%
      \ifnum#2<\@m
        \@ifundefined{#1name}{}{%
          \ignorespaces\csname #1name\endcsname\space
        }%
      \fi
      \@seccntformat{#1}%
    }%
  \fi
  \ifdim \@tempskipa>\z@ 
    \begingroup #6\relax
    \@hangfrom{\hskip #3\relax\@svsec}{\interlinepenalty\@M #8\par}%
    \endgroup
    \ifnum#2>\@m \else \@tocwrite{#1}{#8}\fi
  \else
  \def\@svsechd{#6\hskip #3\@svsec
    \@ifnotempty{#8}{\ignorespaces#8\unskip
       \@addpunct.}%
    \ifnum#2>\@m \else \@tocwrite{#1}{#8}\fi
  }%
  \fi
  \global\@nobreaktrue
  \@xsect{#5}}
\makeatother

\newtheorem{theorem}{Theorem}
\newtheorem{conjecture}[theorem]{Conjecture}
\newtheorem{corollary}[theorem]{Corollary}
\newtheorem{definition}[theorem]{Definition}
\newtheorem{lemma}[theorem]{Lemma}

\newcommand{\disconnect}{\leavevmode\par}

\makeatletter
\def\pxspace{\@ifnextchar.{\@}{.\@\xspace}}
\makeatother

\newcommand{\forwardref}[1]{\ref{#1}}

\newcommand{\mbbB}{\mathbb{B}}
\newcommand{\Bfin}{\mbbB_{\textup{fin}}}
\newcommand{\Bcofin}{\mbbB_{\textup{cofin}}}
\newcommand{\mbbN}{\mathbb{N}}
\newcommand{\mbbZ}{\mathbb{Z}}

\newcommand{\zerohat}{\hat{0}}
\newcommand{\onehat}{\hat{1}}

\newcommand{\scrF}{\mathcal{F}}
\newcommand{\scrI}{\mathcal{I}}
\newcommand{\scrL}{\mathcal{L}}
\newcommand{\scrP}{\mathcal{P}}
\newcommand{\PF}[1]{{{#1}\mathclose{\uparrow}}}
\newcommand{\PI}[1]{{{#1}\mathclose{\downarrow}}}

\newcommand{\Symdiff}{\mathop{\triangle}}
\newcommand{\doublewedge}{\mathop{\wedge\mkern-8mu\wedge}}
\newcommand{\doublevee}{\mathop{\vee\mkern-8mu\vee}}
\newcommand{\UM}{\doublewedge}
\newcommand{\UJ}{\doublevee}

\newcommand{\Raise}{\mathord{\Uparrow}}
\newcommand{\Lower}{\mathord{\Downarrow}}
\newcommand{\sslash}{\mathord{
  \mathchoice{/\mkern-5mu/}
    {/\mkern-4mu/}
    {/\mkern-5mu/}
    {/\mkern-5mu/}}}
\newcommand{\transpose}[2]%
{#1^+/#1^- \searrow #2^+/#2^-}

\newcommand{\mvert}{\,|\,}      

\begin{document}

\newcommand{\ipeFigZxZ}{1}
\newcommand{\ipeFigZxZwithFilters}{2}
\newcommand{\ipeFigZxZfilters}{3}
\newcommand{\ipeFigZxZprimes}{4}
\newcommand{\ipeFigZxZideals}{5}
\newcommand{\ipeFigZxZchain}{6}
\newcommand{\ipeFigSSchain}{7}

\title[Representation of locally-finite lattices]%
{An extension of Birkhoff's representation theorem to
locally-finite distributive lattices}
\author{Dale R. Worley}
\email{worley@alum.mit.edu}
\date{Mar 15, 2026} 

\begin{abstract}
Birkhoff's representation theorem for
finite distributive lattices states that any finite distributive lattice
is isomorphic to the lattice of order ideals (lower sets) of the
partial order of the join-irreducible elements of the lattice.
We present a simplified version of Stone's extension of this theorem
to general distributive lattices.
We then apply this formulation to locally finite distributive lattices
to produce a novel representation theorem:
The lattice is isomorphic to the order ideals of the poset of prime
filters of the lattice whose symmetric difference from a particular
ideal is finite.
\end{abstract}

\maketitle

\textit{The 2\textsuperscript{nd} version is corrected and completed.}

\section{Introduction}

Birkhoff's representation theorem for finite distributive lattices has
long been known.

\begin{theorem}[Birkhoff \cite{Birk1937a}\cite{Birk1967a}*{\S III.3 p.\ 58}] %
\label{th:birk}
Any finite distributive lattice $\scrL$ is isomorphic to the
lattice of order ideals of the partial order of the join-irreducible
elements of $\scrL$.
\end{theorem}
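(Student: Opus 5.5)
The plan is to write down the explicit map
\[
\phi\colon \scrL \to \scrI(J), \qquad \phi(x) = \{\, j \in J : j \le x \,\},
\]
where $J$ is the poset of join-irreducible elements of $\scrL$ (ordered as in $\scrL$, with $\zerohat$ excluded) and $\scrI(J)$ is its lattice of order ideals ordered by inclusion. I then check that $\phi$ is an order isomorphism; an order isomorphism between lattices is automatically a lattice isomorphism, so this suffices. Each $\phi(x)$ is clearly a down-set in $J$. Also $x \le y$ implies $\phi(x) \subseteq \phi(y)$.

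The first key step is the decomposition
\[
x = \bigvee \phi(x) \quad \text{for every } x \in \scrL,
\]
with the empty join equal to $\zerohat$. I prove it by induction on the finite height of $x$, or by taking a minimal counterexample. If $x = \zerohat$ or $x$ is join-irreducible, the claim is immediate. Otherwise $x = a \vee b$ with $a, b < x$. By induction $a$ and $b$ are joins of join-irreducibles below them, and all of these lie below $x$. This step uses only finiteness.

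This step gives order-reflection: if $\phi(x) \subseteq \phi(y)$, then $x = \bigvee \phi(x) \le \bigvee \phi(y) = y$. Hence $\phi$ is an order embedding, and in particular injective.

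The remaining and main step is surjectivity, which is where distributivity is needed. Given an order ideal $I \subseteq J$, set $x = \bigvee I$. Clearly $I \subseteq \phi(x)$. For the reverse inclusion, take $j \in J$ with $j \le x = \bigvee_{i \in I} i$. By distributivity,
\[
j = j \wedge \bigvee_{i\in I} i = \bigvee_{i \in I} (j \wedge i).
\]
Since $j$ is join-irreducible and $I$ is finite, a short induction on $|I|$ gives $j = j \wedge i$ for some $i \in I$. That is, $j \le i$, and since $I$ is a down-set, $j \in I$. Hence $\phi(x) = I$.

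The only point needing care is $I = \emptyset$. Then $x = \zerohat$, and $\phi(\zerohat) = \emptyset$ precisely because $\zerohat$ is excluded from $J$. This shows $\phi$ is a bijective order embedding and hence a lattice isomorphism. As a consequence, meets and joins correspond to intersections and unions of ideals, since those are the lattice operations in $\scrI(J)$.
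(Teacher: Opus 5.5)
Your proof is correct and takes essentially the paper's approach: the paper's sketch uses exactly the map $x \mapsto \{j \in J : j \le x\}$, and its general prime-filter framework reduces to the same map for finite $\scrL$, since every prime filter is then $\PF{j}$ for a join-irreducible $j$ (Th.~\ref{th:finitary}, Lem.~\ref{lem:PF-irred}). You verify that the map is a bijective order embedding, using distributivity to make join-irreducibles join-prime for surjectivity; this is equivalent to the paper's phrasing that meets and joins correspond to intersections and unions of order ideals.
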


The essence of the proof is every element $x \in \scrL$ is mapped into the
order ideal consisting of the join-irreducible elements of $\scrL$ that
are $\leq x$.  The the meet and join of $\scrL$ correspond to
intersection and union of order ideals.

A lattice is \emph{finitary} if every principal ideal (the set of
elements $\leq$ a given element) is finite.  Stanley shows that the
representation theorem can be extended to finitary distributive
lattices.

\begin{theorem} \cite{Stan2012a}*{Prop. 3.4.3} \label{th:stanley}
Any \emph{finitary} distributive lattice $\scrL$ is isomorphic to the
lattice of \emph{finite} order ideals of the partial order of the
join-irreducible elements of $\scrL$.
\end{theorem}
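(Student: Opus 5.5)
Let $P$ be the poset of join-irreducible elements of $\scrL$, and define $\phi(x) = \{ p \in P : p \leq x \}$. The plan is to show that $\phi$ is an isomorphism from $\scrL$ onto the lattice of finite order ideals of $P$, following Theorem~\ref{th:birk} but working inside principal ideals. One convention matters: a finitary lattice has a least element $\zerohat$ (any two principal ideals share an element, and every principal ideal is finite and downward closed, so it contains a minimal element, which must be the minimum of $\scrL$), and we take $\zerohat$ not to be join-irreducible.

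\textbf{$\phi$ is well defined and preserves order.} $\phi(x)$ is clearly a down-set of $P$, and it is finite because it lies inside the finite principal ideal $\PI{x}$. Also $\phi$ is monotone.

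\textbf{Every element is the join of $\phi(x)$.} Suppose some $x$ is not the join of $\phi(x)$. Choose such an $x$ that is minimal; this is possible because $\PI{x}$ is finite. Then $x \neq \zerohat$, since $\zerohat$ is the empty join. Also $x$ is not join-irreducible, since otherwise $x \in \phi(x)$. So $x = a \vee b$ with $a, b < x$. By minimality, $a = \bigvee\phi(a)$ and $b = \bigvee\phi(b)$, and both $\phi(a)$ and $\phi(b)$ are contained in $\phi(x)$. Hence $x \leq \bigvee \phi(x) \leq x$, a contradiction. It follows that $\phi$ is injective and reflects order.

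\textbf{$\phi$ is a lattice homomorphism.} Meets are immediate: $\phi(x\wedge y) = \phi(x)\cap\phi(y)$. For joins, take $p \in P$ with $p \leq x \vee y$. By distributivity, $p = (p\wedge x)\vee(p\wedge y)$. Since $p$ is join-irreducible, $p$ equals one of the two terms, so $p \leq x$ or $p \leq y$. Thus $\phi(x\vee y) = \phi(x)\cup\phi(y)$.

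\textbf{$\phi$ is surjective.} This is the one step where finiteness of the ideals is essential. Given a finite order ideal $I \subseteq P$, put $x = \bigvee I$; this exists because $I$ is finite, with the empty join being $\zerohat$. Clearly $I \subseteq \phi(x)$. Conversely, let $p \in \phi(x)$. Then $p \leq \bigvee I$, and iterating the join-prime argument from the previous step over the finitely many elements of $I$ gives $p \leq q$ for some $q \in I$. Since $I$ is a down-set, $p \in I$.

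\textbf{Conclusion and main obstacle.} $\phi$ is therefore a bijective lattice homomorphism onto the lattice of finite order ideals, which is closed under $\cap$ and $\cup$. I expect the only genuinely delicate point to be the well-founded induction in the second step: it uses finiteness of principal ideals, and it needs the existence of $\zerohat$ to handle the base case.
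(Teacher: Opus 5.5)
Your proof is correct, but it takes a different route from the paper.

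You argue directly inside $\scrL$, in the style of Birkhoff's and Stanley's original proofs. Your key lemma is that every $x$ is the join of the join-irreducibles below it, proved by well-founded induction on the finite principal ideals. Distributivity then makes join-irreducibles join-prime, which gives both $\phi(x\vee y)=\phi(x)\cup\phi(y)$ and $\phi(\bigvee I)=I$.

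The paper instead obtains the result as a special case of its general prime-filter representation. Th.~\ref{th:phi-embed} already embeds $\scrL$ into the order ideals $\scrI$ of the poset $\scrP$ of prime filters. Finitarity enters only through th.~\ref{th:finitary}: every lattice filter has a minimum and is therefore principal. Combined with lem.~\ref{lem:PF-irred}, this identifies $\scrP$ with the join-irreducibles via $j\mapsto\PF{j}$, and under that identification the paper's $\phi$ becomes yours. Finiteness of $\phi(x)$ and $\phi(\bigvee I)=I$ then follow.

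Your key lemma plays the role of the paper's separation result, cor.~\ref{cor:plf-separates}. If $x\not\le y$, some join-irreducible $j\le x$ has $j\not\le y$, so $\PF{j}$ separates $x$ from $y$.

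Your route is self-contained and elementary, and it avoids the Hausdorff maximal principle used in lem.~\ref{lem:ji-separates-all}. The paper's route is heavier for this case, but it isolates finitarity in the single statement that there are no secondary primes. That is exactly the hypothesis the paper drops when it passes to locally-finite lattices such as $\mbbZ\times\mbbZ$. There, join-irreducibles may not exist at all, and the image of $\phi$ is $D_{\phi(x)}$ rather than the finite ideals.
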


In combinatorics many interesting lattices aren't finitary but they
are locally-finite, so it is desirable to extend the representation
theorem to locally-finite lattices in some way.  Two obstacles
immediately present themselves.  One is that some locally-finite
lattices have no join-irreducible elements at all, making it unclear
what a representation could be constructed from.  A premier
example of this is $\mbbZ \times \mbbZ$, as shown in
fig.~\ref{fig:ZxZ}.

\begin{figure}[ht]
\centering
\begin{minipage}{0.45\textwidth}
  \centering
  \includegraphics[page=\ipeFigZxZ]{locally-finite-figs.pdf}%
  \caption{The lattice $\mbbZ \times \mbbZ$}
  \label{fig:ZxZ}
\end{minipage}
\begin{minipage}{0.45\textwidth}
  \centering
  \includegraphics[page=\ipeFigZxZwithFilters]{locally-finite-figs.pdf}%
  \caption{The lattice $\mbbZ \times \mbbZ$ with some filters outlined}
  \label{fig:ZxZwithFilters}
\end{minipage}
\end{figure}

The second obstacle is that in many useful cases, the lattice is
isomorphic to the lattice of some, but not all, of the order ideals of
join-irreducible elements.  Often these
ideals consist of all the ideals with the exception of the empty ideal
and the ideal of all join-irreducible elements.  A typical example is
shown in fig.~\ref{fig:Plait-SS}.

\begin{figure}[ht]
\hbox to \linewidth{\hfill%
\includegraphics[scale=0.15]{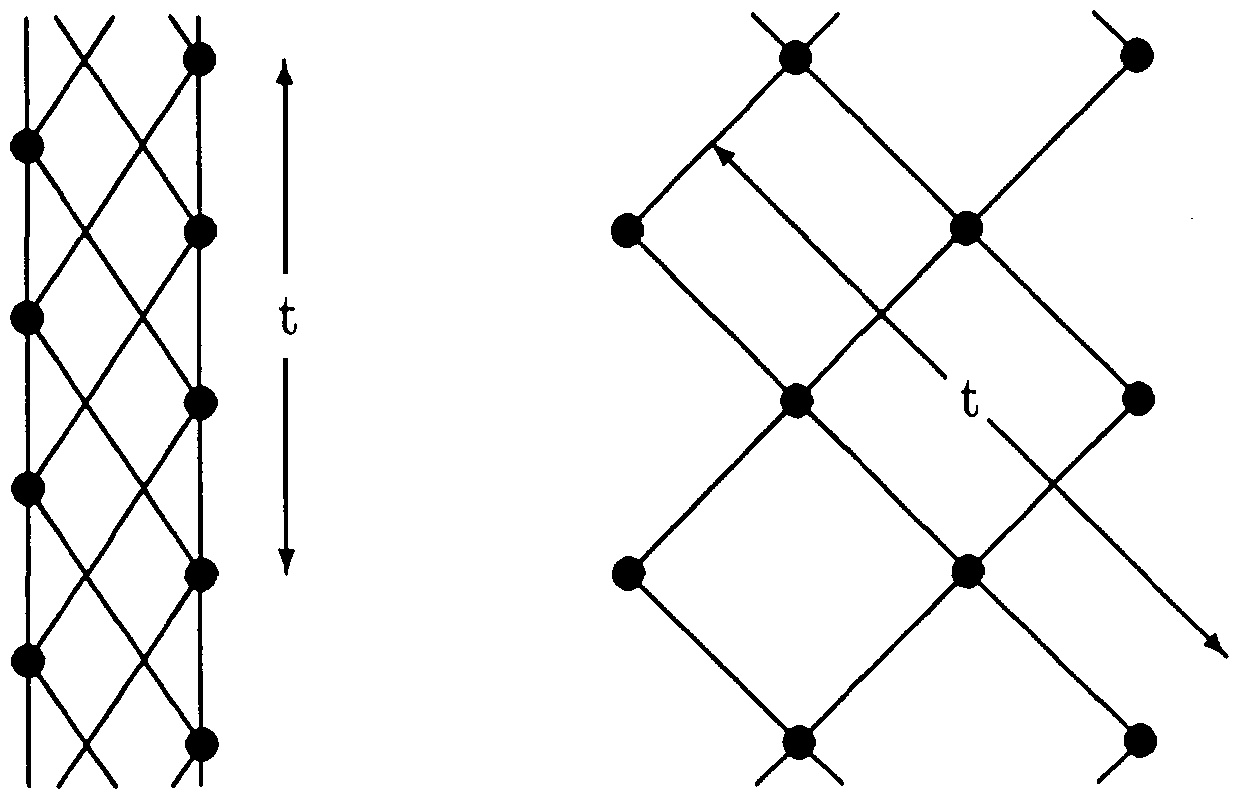}%
\hfill}
\caption{The lattice $t$-SkewStrip (right) and its poset of
join-irreducible elements $t$-Plait (left).
(Taken from \cite{Fom1994a}*{Fig.~7}.)}
\label{fig:Plait-SS}
\end{figure}

Thus our major goal is a general theory of representation of
locally-finite distributive lattices.
A secondary goal is
understanding lattices that are isomorphic to the lattice of some, but
not all, order ideals of their join-irreducible elements.

\section{Representation of non-finitary lattices}

First we will lay out the analysis of general distributive lattices.
Our analysis follows the discussion
in \cite{Nat2017a} and our references to it refer to chapter 8 if not
otherwise specified.%
\footnote{Prof.\ Nation informs me that the discussion in
\cite{Nat2017a} is derived from
Bob Dilworth's lattice theory course at Caltech in 1970--71.}
This discussion is an alternative to Stone's presentation of a
representation \cite{Stone1938a} that avoids using the machinery of
topology, which makes its application to discrete lattices clearer.
Following the general analysis, we will apply it to locally-finite
lattices.

For the remainder of this article, let $\scrL$ be a distributive lattice.
As an example, consider $\scrL = \mbbZ\times\mbbZ$ in fig.~\ref{fig:ZxZ}

\begin{definition} \label{def:ofilter}
Let $P$ be a poset.
We define that $F \subset P$ is an \emph{order filter} (of $P$) iff
$x \in F$ and $x \leq z$ imply $z \in F$.
We define that $I \subset P$ is an \emph{order ideal} (of $P$) iff
$x \in F$ and $z \leq x$ imply $z \in I$.
\end{definition}

\begin{definition} \cite{Nat2017a}*{sec.~7} \label{def:lfilter}
Let $\scrL$ be a lattice.
We define that $f \subset \scrL$ is a \emph{lattice filter} (of $\scrL$) iff
(1) $f$ is non-empty,
(2) $x,y \in f$ implies $x \wedge y \in f$, and
(3) $x \in f$ and $x \leq z$ imply $z \in f$.
We define that $i \subset \scrL$ is a \emph{lattice ideal} (of $\scrL$) iff
(1) $i$ is non-empty,
(2) $x,y \in i$ implies $x \vee y \in i$, and
(3) $x \in i$ and $z \leq x$ imply $z \in i$.
\end{definition}

\begin{definition} \cite{Nat2017a} \label{def:plfilter}
We define that $f$ is a \emph{proper lattice filter} (of a lattice $\scrL$) iff
$f$ is a lattice filter and $f$ is $\neq \scrL$.
We define a \emph{proper lattice ideal} dually.
\end{definition}

There is of course a natural duality on lattices, so for many facts we
prove, we also state the dual fact without giving the dual proof.

\begin{definition} \label{def:F}
We define $\scrF$ to be the poset of lattice filters of
$\scrL$, ordered by \emph{inverse} inclusion.
Dually, we define $\scrF^\prime$ to be the poset of lattice ideals of
$\scrL$, ordered by \emph{inclusion}.
\end{definition}

For $\scrL = \mbbZ \times \mbbZ$, some filters (all proper) are shown in
fig.~\ref{fig:ZxZwithFilters} and the corresponding lattice $\scrF$ is
shown in fig.~\ref{fig:ZxZfilters}.

\begin{figure}[ht]
\centering
\begin{minipage}{0.45\textwidth}
  \centering
  \includegraphics[page=\ipeFigZxZfilters]{locally-finite-figs.pdf}%
  \caption{The lattice of filters $\scrF$ for $\scrL = \mbbZ
    \times \mbbZ$ with its prime elements outlined, all of which are
    secondary primes}
  \label{fig:ZxZfilters}
\end{minipage}
\begin{minipage}{0.45\textwidth}
  \centering
  \includegraphics[page=\ipeFigZxZprimes]{locally-finite-figs.pdf}%
  \caption{The poset of prime filters $\scrP$ for $\scrL = \mbbZ$.
    $\scrP = A \sqcup B$}
  \label{fig:ZxZprimes}
\end{minipage}
\end{figure}

\begin{definition} \label{def:UM}
Given $f, g \in \scrF$, we define
$f \UM g = \{x \wedge y \mvert x \in f \textup{ and } y \in g\}$.
Dually, given $f^\prime, g^\prime \in \scrF^\prime$, we define
$f^\prime \UJ g^\prime =
\{x \vee y \mvert x \in f^\prime \textup{ and } y \in g^\prime\}$.%
\footnote{We pronounce $f \UM g$ as ``$f$ union-meet $g$'' due to
lem.~\forwardref{lem:meet-closure}.
Dually, we pronounce $f^\prime \UJ g^\prime$ as
``$f^\prime$ union-join $g^\prime$''.}
\end{definition}

\begin{lemma} \label{lem:UM-contains}
Given $f, g \in \scrF$, $f \UM g$ contains $f$ and $g$.
Dually, given $f^\prime, g^\prime \in \scrF^\prime$,
$f^\prime \UJ g^\prime$ contains $f^\prime$ and $g^\prime$.
\end{lemma}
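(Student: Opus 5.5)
To show $f \subseteq f \UM g$, I will take an arbitrary $x \in f$ and exhibit it as $x \wedge y$ with $y \in g$. By def.~\ref{def:lfilter}(1), $g$ is non-empty, so I will pick some $y_0 \in g$. Setting $y = x \vee y_0$, condition (3) of def.~\ref{def:lfilter} gives $y \in g$, since $y_0 \leq x \vee y_0$. Absorption then gives $x \wedge y = x \wedge (x \vee y_0) = x$. Hence $x = x \wedge y$ with $x \in f$ and $y \in g$, so $x \in f \UM g$.

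Since the definition of $f \UM g$ is symmetric in $f$ and $g$ (because $\wedge$ is commutative), the same argument with the roles exchanged gives $g \subseteq f \UM g$. The dual statement for $f^\prime \UJ g^\prime$ follows by the dual argument. For $x \in f^\prime$ and some $y_0 \in g^\prime$, I will take $y = x \wedge y_0 \in g^\prime$, which lies in $g^\prime$ because lattice ideals are down-closed. Absorption then gives $x \vee y = x$.

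There is no real obstacle here. The only point needing care is that non-emptiness of the other filter is essential. Without it, $f \UM g$ could be empty, so this is the step where condition (1) of def.~\ref{def:lfilter} must be invoked explicitly. Note that distributivity is not needed.
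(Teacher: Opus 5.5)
Your proposal is correct and is essentially the paper's proof. The paper likewise picks any $y \in g$, notes $x \vee y \in g$ by upward closure, and uses absorption $x = x \wedge (x \vee y)$ to get $x \in f \UM g$; it handles $g$ by symmetry and the ideal case by duality, just as you do.
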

\begin{proof} \disconnect
Given any $x \in f$ we can choose any
$y\in g$ and then $x \vee y \in g$, so
$x = x \wedge (x \vee y)$, showing $x \in f \UM g$.
Similarly, $f \UM g$ contains $g$.
\end{proof}

\begin{lemma} \label{lem:meet-closure}
Given $f, g \in \scrF$,
$f \UM g$ is a lattice filter and is the meet-closure of $f \cup g$.
Dually, given $f^\prime, g^\prime \in \scrF^\prime$,
$f^\prime \UM g^\prime$ is a lattice ideal and is the join-closure
of $f^\prime \cup g^\prime$.
\end{lemma}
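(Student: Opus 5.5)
We check the three defining conditions of a lattice filter for $f \UM g$ directly, and then identify it with the meet-closure of $f \cup g$. Only the filter half is treated; the ideal half is the order dual, with $\wedge$ and $\vee$ exchanged. (The dual statement has a typo and should read $f^\prime \UJ g^\prime$.)

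\emph{Non-emptiness.} Since $f$ and $g$ are lattice filters, both are non-empty. Hence $f \UM g$ is non-empty, and in fact it contains $f \cup g$ by lem.~\ref{lem:UM-contains}.

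\emph{Closure under meets.} Take $x \wedge y$ and $x' \wedge y'$ in $f \UM g$, with $x, x' \in f$ and $y, y' \in g$. By associativity and commutativity,
\[
(x\wedge y)\wedge(x'\wedge y') = (x\wedge x')\wedge(y\wedge y').
\]
Here $x \wedge x' \in f$ and $y \wedge y' \in g$, so the meet lies in $f \UM g$.

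\emph{Upward closure.} This is the one step that needs distributivity, and I expect it to be the main point. Suppose $x \wedge y \le z$ with $x \in f$ and $y \in g$. I will write $z$ as a meet of an element of $f$ and an element of $g$. Take $x' = x \vee z$ and $y' = y \vee z$. Then $x' \ge x$, so $x' \in f$, and likewise $y' \in g$. By distributivity,
\[
x' \wedge y' = (x \vee z)\wedge(y\vee z) = (x\wedge y)\vee z = z,
\]
where the last equality uses $x \wedge y \le z$. So $z \in f \UM g$, and $f \UM g$ is a lattice filter.

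\emph{Meet-closure.} Let $M$ be the meet-closure of $f \cup g$, meaning the smallest subset of $\scrL$ containing $f \cup g$ and closed under binary meets.
\begin{itemize}
\item $f \UM g$ contains $f \cup g$ (lem.~\ref{lem:UM-contains}) and is closed under meets (shown above), so $M \subseteq f \UM g$.
\item Every element $x \wedge y$ of $f \UM g$ is the meet of an element of $f$ and an element of $g$, both of which lie in $M$. So $f \UM g \subseteq M$.
\end{itemize}
Therefore $f \UM g = M$. It is also worth noting that this meet-closure is automatically upward closed, but that holds only because of the distributive computation above.
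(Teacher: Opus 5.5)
Your proof is correct and follows the paper's argument essentially step for step. You use the same three filter checks, with upward closure obtained from $(x\vee z)\wedge(y\vee z)=(x\wedge y)\vee z=z$, and the same two inclusions, via lem.~\ref{lem:UM-contains} and meet-closedness, to identify $f \UM g$ with the meet-closure of $f \cup g$. You are also right that the dual statement should read $f^\prime \UJ g^\prime$.
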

\begin{proof} \disconnect
Regarding $f \UM g$ is non-empty:  This is trivial because $f$ and $g$
are non-empty.

Regarding $f \UM g$ is meet-closed:
Given two elements of $f \UM g$, $x \wedge y$ and
$x^\prime \wedge y^\prime$ with $x, x^\prime \in f$ and
$y, y^\prime \in g$, since $f$ and $g$ are meet-closed,
$x \wedge x^\prime \in f$ and $y \wedge y^\prime \in g$.
Then
$(x \wedge y) \wedge (x^\prime \wedge y^\prime)
= (x \wedge x^\prime) \wedge (y \wedge y^\prime) \in f \UM g$.

Regarding $f \UM g$ is closed upward:
Given $x \in f$, $y \in g$ and $z \in \scrL$ with
$x \wedge y \leq z$
then $z \vee x \in f$ and $z \vee y \in g$.
By distributivity, $(z \vee x) \wedge (z \vee y) = z \vee (x \wedge y) = z$,
showing $z \in f \UM g$.

Regarding $f \UM g$ is contained in the meet-closure of $f \cup g$:
Trivial by def.~\ref{def:UM}.

Regarding $f \UM g$ contains the meet-closure of $f \cup g$:
This follows from lem.~\ref{lem:UM-contains} and the fact that
$f \UM g$ is meet-closed.
\end{proof}

\begin{lemma} \label{lem:F-lattice}
$\scrF$ is a lattice, with its join being $\cap$ and its meet being $\UM$.
Dually,
$\scrF^\prime$ is a lattice, with its join being $\UJ$ and its meet
being $\cap$.
\end{lemma}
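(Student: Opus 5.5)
The plan is to identify least upper bounds and greatest lower bounds in $\scrF$ directly. Recall that $\scrF$ is ordered by \emph{inverse} inclusion, so $f \le g$ in $\scrF$ means $f \supseteq g$. Hence the join of $f$ and $g$ must be the largest (under inclusion) filter contained in both. The meet must be the smallest filter containing both. I would first check that $\le$ is a partial order, which is immediate because inclusion is one. Then I would handle the join and the meet separately.

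\textbf{Join.} I will show that $f \cap g$ is a lattice filter and that it is the largest filter contained in both $f$ and $g$.
\begin{itemize}
\item Meet-closure and upward closure are inherited from $f$ and $g$.
\item Nonemptiness is the only point needing an argument. Pick $x \in f$ and $y \in g$. Then $x \vee y \ge x$ gives $x \vee y \in f$, and $x \vee y \ge y$ gives $x \vee y \in g$. So $x \vee y \in f \cap g$.
\item Any filter $h$ with $h \subseteq f$ and $h \subseteq g$ satisfies $h \subseteq f \cap g$.
\end{itemize}
So $f \cap g$ is the least upper bound of $f$ and $g$ in $\scrF$.

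\textbf{Meet.} By lem.~\ref{lem:meet-closure}, $f \UM g$ is a lattice filter. By lem.~\ref{lem:UM-contains}, it contains both $f$ and $g$, so it is a lower bound of both in $\scrF$. For minimality, let $h$ be any filter with $h \supseteq f \cup g$. Since $h$ is meet-closed, it contains the meet-closure of $f \cup g$, which by lem.~\ref{lem:meet-closure} is exactly $f \UM g$. So $f \UM g$ is the greatest lower bound.

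\textbf{Dual statement.} The claim for $\scrF^\prime$ follows by the same argument with the order reversed. There the order is ordinary inclusion, so $\cap$ is the meet and $\UJ$ is the join.

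I expect no real obstacle. The only places where care is needed are keeping the direction of the inverse-inclusion order straight, and checking that $f \cap g$ is nonempty. The latter is why lattice filters must be required to be nonempty, and the element $x \vee y$ handles it.
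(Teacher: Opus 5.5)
Your proof is correct and follows essentially the same approach as the paper. You show that $f \cap g$ is the least upper bound under inverse inclusion, and that $f \UM g$ is the greatest lower bound because any filter containing $f$ and $g$ is meet-closed and so contains every $x \wedge y$. The paper argues this last step elementwise instead of citing the meet-closure characterization in lem.~\ref{lem:meet-closure}, and your explicit check that $f \cap g$ is nonempty via $x \vee y$ supplies a detail the paper only asserts.
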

\begin{proof} \disconnect
Regarding join:
That the join of two lattice filters $f$ and $g$ is $f \cap g$ is
straightforward:  $f \cap g$ is necessarily a lattice filter.  $f \cap g$ is
$\geq$ (that is, is contained in) $f$ and $g$.  Clearly, any lattice filter
$h \geq f, g$ is contained in both $f$ and $g$, so $h$ must be contained
in $f \cap g$, and so must be $\geq f \cap g$.

Regarding meet:
Given two lattice filters, $f$ and $g$, let $h = f \UM g$.
By lem.~\ref{lem:UM-contains}, $f \subset h$, which is $h \leq f$.
Similarly $h \leq g$.
To show that $h$ is the greatest lower bound of $f$ and $g$,
assume $s$ is an lattice filter and $s \leq f, g$, that is,
$f \subset s$ and $g \subset s$.
Consider an arbitrary element $z$ of $h$,
so $z = x \wedge y$ for some $x \in f$ and $y \in g$.
Then $x$ and $y$ are $\in s$, and since
$s$ is closed under meets, $z = x \wedge y \in s$.
Thus $h \subset s$, which is $s \leq h$.
\end{proof}

\begin{lemma} \label{lem:distributive}
$\scrF$ is a distributive lattice.
Dually, $\scrF^\prime$ is a distributive lattice.
\end{lemma}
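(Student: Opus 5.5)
We must show that $\scrF$ is distributive, with join $\cap$ and meet $\UM$ (lem.~\ref{lem:F-lattice}). It suffices to verify one distributive law, since in any lattice each distributive law implies its dual. We prove
\[
f \cap (g \UM h) = (f \cap g) \UM (f \cap h)
\]
for all $f,g,h \in \scrF$. This is the law that $\vee$ distributes over $\wedge$ in $\scrF$, because the join of $\scrF$ is $\cap$ and its meet is $\UM$.

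\emph{Inclusion $\supseteq$.} By lem.~\ref{lem:meet-closure}, the right side is the meet-closure of $(f\cap g)\cup(f\cap h)$. That set is contained in $f$, which is meet-closed. It is also contained in $g\UM h$, which is meet-closed and contains $g$ and $h$ by lem.~\ref{lem:UM-contains}. Hence the meet-closure is contained in $f\cap(g\UM h)$.

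\emph{Inclusion $\subseteq$.} This is the main step, and it uses the distributivity of $\scrL$. Let $z \in f \cap (g\UM h)$, so $z\in f$ and $z = y\wedge w$ with $y\in g$ and $w\in h$. Put $y' = z\vee y$ and $w' = z\vee w$.
\begin{itemize}
\item Since $y' \geq z$ and $z\in f$, upward closure gives $y'\in f$. Since $y'\geq y$ and $y\in g$, we get $y'\in g$. So $y'\in f\cap g$.
\item By the same argument, $w'\in f\cap h$.
\item By distributivity of $\scrL$,
\[
y'\wedge w' = (z\vee y)\wedge(z\vee w) = z\vee(y\wedge w) = z\vee z = z.
\]
\end{itemize}
Thus $z = y'\wedge w' \in (f\cap g)\UM(f\cap h)$. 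This is the same trick used in the upward-closure part of the proof of lem.~\ref{lem:meet-closure}.

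Together the two inclusions give the distributive law, so $\scrF$ is a distributive lattice.

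The statement for $\scrF^\prime$ follows by the dual argument: interchange $\wedge$ and $\vee$, and replace filters by ideals and $\UM$ by $\UJ$.
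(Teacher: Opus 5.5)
Your proof is correct and takes essentially the same approach as the paper: both reduce to the single identity $f \cap (g \UM h) = (f \cap g) \UM (f \cap h)$, and both get the key inclusion by writing $z = (z \vee y) \wedge (z \vee w)$ using distributivity of $\scrL$. Your reverse inclusion via meet-closures is only a more abstract phrasing of the paper's direct observation that $y' \wedge w'$ lies in $g \UM h$ by definition and in $f$ because $f$ is meet-closed.
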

\begin{proof} \disconnect
Since we know $\scrF$ is a lattice, it suffices to prove that
for any $f, g, h \in \scrF$,
$f \cap (g \UM h) = (f \cap g) \UM (f \cap h)$, which is equivalent to
showing that for any $x \in \scrL$,
$x \in f \cap (g \UM h)$ iff $x \in (f \cap g) \UM (f \cap h)$.
To show that, the following are equivalent:
\begin{gather}
x \in f \cap (g \UM h) \notag \\
x \in f \textup{ and } x \in g \UM h \notag \\
x \in f \textup{ and } (\exists y \in g, z \in h)\, x = y \wedge z \label{eq:dist-A}
\end{gather}
To derive (\forwardref{eq:dist-B}) from (\ref{eq:dist-A}):
Since $x = y \wedge z = x \vee (y \wedge z) = (x \vee y) \wedge (x \vee z)$
and $f$, $g$, and $h$ are lattice filters,
$$ x \in f \textup{ and }
(\exists y \in g, z \in h)\, x \vee y \in f \cap g \textup{ and } x \vee z \in f
\cap h \textup{ and } x = (x \vee y) \wedge (x \vee z) $$
Then choose $ y^\prime = x \vee y$ and $z^\prime = x \vee z$.
To derive (\ref{eq:dist-A}) from (\forwardref{eq:dist-B}):
Choose $y = y^\prime$ and $z = z^\prime$, and
note that $f$ is closed under meets.
Thus, (\ref{eq:dist-A}) is equivalent to:
\begin{gather}
(\exists y^\prime \in f \cap g, z^\prime \in f \cap h)\,
x = y^\prime \wedge z^\prime \label{eq:dist-B} \\
x \in (f \cap g) \UM (f \cap h) \notag
\end{gather}
\end{proof}

\begin{definition} \cite{Nat2017a} \label{def:pfilter}
We define that a lattice filter $f$ is \emph{prime} iff
$f$ is proper and if $x \vee y \in f$ implies $x \in f$ or
$y \in f$.
We define a \emph{prime lattice ideal} dually.
\end{definition}

\begin{lemma} \cite{Nat2017a} \label{lem:filter-ideal}
A lattice filter $f$ is prime iff
$\scrL \setminus f$ is a lattice ideal iff
$\scrL \setminus f$ is a prime lattice ideal.
Dually,
a lattice ideal $f$ is prime iff
$\scrL \setminus f$ is a lattice filter iff
$\scrL \setminus f$ is a prime lattice filter.
\end{lemma}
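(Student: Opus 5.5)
I will prove the cycle of implications for filters: ($f$ prime) $\Rightarrow$ ($\scrL \setminus f$ is a lattice ideal) $\Rightarrow$ ($\scrL \setminus f$ is a prime lattice ideal) $\Rightarrow$ ($f$ prime). The statement for ideals follows by duality, interchanging $\wedge$ and $\vee$ and filters with ideals. Throughout, write $i = \scrL \setminus f$. The only facts needed are def.~\ref{def:lfilter}, def.~\ref{def:plfilter} and def.~\ref{def:pfilter}; no structure of $\scrF$ is required.

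\emph{Step 1: $f$ prime implies $i$ is a lattice ideal.} Since $f$ is proper, $i$ is non-empty. For downward closure, suppose $x \in i$ and $z \leq x$. If $z \in f$, then $x \in f$ because $f$ is closed upward, which is a contradiction; hence $z \in i$. For join-closure, let $x, y \in i$. If $x \vee y \in f$, primeness gives $x \in f$ or $y \in f$, again a contradiction; so $x \vee y \in i$.

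\emph{Step 2: if $i$ is a lattice ideal, it is a prime lattice ideal.} Properness holds because $f$ is non-empty, so $i \neq \scrL$. Now suppose $x \wedge y \in i$ and, toward a contradiction, $x \notin i$ and $y \notin i$. Then $x, y \in f$, and since $f$ is meet-closed, $x \wedge y \in f$, which contradicts $x \wedge y \in i$.

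\emph{Step 3: $i$ a prime lattice ideal implies $f$ prime.} Since $i$ is non-empty, $f \neq \scrL$, so $f$ is proper. Suppose $x \vee y \in f$ but $x, y \notin f$. Then $x, y \in i$, and join-closure of $i$ gives $x \vee y \in i$, a contradiction. Note that this step uses only that $i$ is a non-empty join-closed set; primeness of $i$ is not needed.

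None of these steps is a real obstacle. The only point needing care is the bookkeeping about non-emptiness and properness. A lattice filter is non-empty by definition, which makes $i$ proper. Conversely, $f$ being proper is exactly what makes $i$ non-empty. This is why properness is built into the definition of prime in def.~\ref{def:pfilter}.
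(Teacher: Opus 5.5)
Your proof is correct: each implication in the cycle is checked directly against def.~\ref{def:lfilter}, def.~\ref{def:plfilter} and def.~\ref{def:pfilter}, the non-emptiness and properness bookkeeping is right, and the ideal half does follow by duality. The paper gives no proof of its own and simply cites Nation's notes, so your argument supplies the routine verification behind that citation; as your argument shows, it does not use distributivity.
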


\begin{definition} \label{def:PF}
For $x \in \scrL$, we define
$\PF{x} = \{y \in \scrL \mvert x \leq y\}$, the \emph{principal filter
generated by} $x$.
For $x \in \scrL$, we define
$\PI{x} = \{y \in \scrL \mvert y \leq x\}$, the \emph{principal ideal
generated by} $x$.
\end{definition}

\begin{lemma} \cite{Nat2017a} \label{lem:PF-irred}
For $x \in \scrL$, $\PF{x}$ is a prime lattice filter iff $x$ is
join-irreducible in $\scrL$.
Dually, for $x \in \scrL$, $\PI{x}$ is a prime lattice ideal iff $x$ is
meet-irreducible in $\scrL$.
\end{lemma}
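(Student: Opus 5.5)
We prove the first statement directly from Definitions~\ref{def:lfilter}, \ref{def:pfilter} and~\ref{def:PF}. The dual statement then follows by reversing the order.

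First we note that $\PF{x}$ is always a lattice filter. It contains $x$, so it is non-empty. If $x \leq y$ and $x \leq z$ then $x \leq y \wedge z$, so it is meet-closed. It is upward closed by transitivity. Hence $\PF{x}$ is prime exactly when two further conditions hold: (a) $\PF{x}$ is proper, and (b) $y \vee z \geq x$ implies $y \geq x$ or $z \geq x$.

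We use the convention, standard in \cite{Nat2017a}, that $x$ is join-irreducible iff $x$ is not the least element of $\scrL$ and $x = y \vee z$ implies $x = y$ or $x = z$. The side condition (a) matches the first clause of this convention. $\PF{x} = \scrL$ holds exactly when every element is $\geq x$, that is, when $x$ is the least element $\zerohat$. So $\PF{x}$ is proper iff $x$ is not the bottom of $\scrL$.

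($\Rightarrow$) Suppose $\PF{x}$ is prime and $x = y \vee z$. Then $y \vee z \in \PF{x}$, so by primeness $y \geq x$ or $z \geq x$. Since also $y, z \leq y \vee z = x$, we get $y = x$ or $z = x$. Together with (a), this shows $x$ is join-irreducible.

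($\Leftarrow$) Suppose $x$ is join-irreducible and $y \vee z \geq x$. This is the one step that needs distributivity. We compute
\[
x = x \wedge (y \vee z) = (x \wedge y) \vee (x \wedge z).
\]
Join-irreducibility then gives $x = x \wedge y$ or $x = x \wedge z$, that is, $x \leq y$ or $x \leq z$. So $y \in \PF{x}$ or $z \in \PF{x}$. Properness comes from (a).

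The main obstacle is only this distributive rewriting in the converse, plus making sure the bottom-element convention is handled consistently.
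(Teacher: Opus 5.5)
Your proof is correct and is the standard argument behind the citation to Nation (the paper gives no proof of its own): $\PF{x}$ is prime exactly when $x \neq \zerohat$ and $x$ is join-prime, and distributivity makes join-prime equivalent to join-irreducible — the remark after the lemma about replacing these conditions by ``join-prime'' in non-distributive lattices alludes to exactly this. Your handling of properness via $x \neq \zerohat$ also matches the paper's convention that join-irreducible elements are not $\zerohat$ (cf.\ lem.~\ref{lem:cover-one}).
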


Note that if $\scrL$ were not distributive, the conditions in this theorem
would have to be modified to ``$x$ is join-prime'' and ``$x$ is meet-prime''.

\begin{definition} \label{def:P}
We define $\scrP$ to be the poset of prime elements of $\scrF$, with
the order (inverse inclusion) inherited from $\scrF$.
Dually,
we define $\scrP^\prime$ to be the poset of prime elements of $\scrF^\prime$, with
the order (inclusion) inherited from $\scrF^\prime$.
\end{definition}

\begin{lemma}
Lem.~\ref{lem:filter-ideal} shows that the
$\omega: \scrP \rightarrow \scrP^\prime: p \mapsto \scrL \setminus p$
is a poset isomorphism between $\scrP$ and $\scrP^\prime$.
\end{lemma}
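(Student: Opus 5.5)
We must show that $\omega$ is well defined, bijective, and order-preserving in both directions. Note first that $\scrP$ is ordered by inverse inclusion while $\scrP^\prime$ is ordered by inclusion. So the order condition to verify is: for $p, q \in \scrP$, $p \leq q$ in $\scrP$ iff $\omega(p) \leq \omega(q)$ in $\scrP^\prime$.

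\emph{Well-definedness.} Let $p \in \scrP$, so $p$ is a prime lattice filter. By lem.~\ref{lem:filter-ideal}, $\scrL \setminus p$ is a prime lattice ideal, that is, an element of $\scrP^\prime$. Hence $\omega$ maps $\scrP$ into $\scrP^\prime$.

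\emph{Bijectivity.} Define $\omega^\prime: \scrP^\prime \rightarrow \scrP: q \mapsto \scrL \setminus q$. By the dual half of lem.~\ref{lem:filter-ideal}, the complement of a prime lattice ideal is a prime lattice filter, so $\omega^\prime$ is well defined. Since set complementation is an involution, $\scrL \setminus (\scrL \setminus p) = p$. Therefore $\omega^\prime \circ \omega$ and $\omega \circ \omega^\prime$ are both identities, and $\omega$ is a bijection with inverse $\omega^\prime$.

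\emph{Order.} For $p, q \in \scrP$ we have the chain of equivalences
$$p \leq q \textup{ in } \scrP \iff q \subset p \iff \scrL \setminus p \subset \scrL \setminus q \iff \omega(p) \leq \omega(q) \textup{ in } \scrP^\prime.$$
The middle step holds because complementation reverses inclusion, and it does so reversibly. The two ends are just the definitions of the orders on $\scrP$ (inverse inclusion, inherited from $\scrF$) and on $\scrP^\prime$ (inclusion, inherited from $\scrF^\prime$). Hence $\omega$ and $\omega^\prime$ are both monotone, and $\omega$ is a poset isomorphism.

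No step here is a real obstacle. The only point needing care is to use both directions of lem.~\ref{lem:filter-ideal}, the primal statement and its dual, so that $\omega$ lands in $\scrP^\prime$ and is also surjective onto it. One must also keep track that the two orders are oppositely defined, so that the order reversal from complementation cancels rather than flipping the order.
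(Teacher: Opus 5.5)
Your proposal is correct and is essentially the paper's argument: the paper gives only the one-line justification via lem.~\ref{lem:filter-ideal}, and you have written it out in full. You use that lemma and its dual so that complementation is a bijection $\scrP \leftrightarrow \scrP^\prime$, and you note that complementation reverses inclusion, which is cancelled because $\scrP$ is ordered by inverse inclusion and $\scrP^\prime$ by inclusion.
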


\begin{lemma} \label{lem:equiv}
Given $x \in \scrL$ and $f \in \scrF$, the following are equivalent:
\begin{enumerate}
\item $f \leq \PF{x}$,
\item $\PF{x} \subset f$, and
\item $x \in f$.
\end{enumerate}
Dually,
given $x \in \scrL$ and $f^\prime \in \scrF^\prime$, the following are equivalent:
\begin{enumerate}
\item $f^\prime \leq \PI{x}$,
\item $f^\prime \subset \PI{x}$, and
\item $x \in f^\prime$.
\end{enumerate}
\end{lemma}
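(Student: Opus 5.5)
The plan is to prove the three-way equivalence for filters directly from the definitions, and then obtain the clause for ideals by order duality, as the paper does elsewhere.

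For the filter clause, (1)$\Leftrightarrow$(2) is immediate from def.~\ref{def:F}. There $\scrF$ is ordered by \emph{inverse} inclusion, so $f \leq \PF{x}$ means exactly $\PF{x} \subset f$. Nothing beyond unwinding the definition is required.

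For (2)$\Rightarrow$(3), observe that $x \in \PF{x}$ by reflexivity of $\leq$. Hence $\PF{x} \subset f$ gives $x \in f$.

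For (3)$\Rightarrow$(2), take any $y \in \PF{x}$, so that $x \leq y$. Since $x \in f$ and $f$ is upward closed by condition (3) of def.~\ref{def:lfilter}, we get $y \in f$. Thus $\PF{x} \subset f$. This is the only step that uses a property of lattice filters, namely upward closure; meet-closure and non-emptiness play no role.

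For the dual clause, I would apply the same argument to $\scrL$ with its order reversed. Under that reversal lattice filters become lattice ideals, $\PF{x}$ becomes $\PI{x}$, and the inverse-inclusion order on $\scrF$ becomes the inclusion order on $\scrF^\prime$ of def.~\ref{def:F}. Items (1) and (2) of the dual clause then coincide by the definition of the order on $\scrF^\prime$: $f^\prime \leq \PI{x}$ means $f^\prime \subset \PI{x}$, exactly as in the filter case.

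Item (3) is linked to the principal ideal by the dual of the two membership steps. One direction uses $x \in \PI{x}$. The other uses downward closure of $f^\prime$, which is condition (3) for ideals in def.~\ref{def:lfilter}.

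The main obstacle is bookkeeping rather than mathematics: keeping straight which direction of containment the order on $\scrF$ versus $\scrF^\prime$ encodes when dualizing. Before writing the dual clause out, I would check item by item that each translated condition is read with the correct orientation.
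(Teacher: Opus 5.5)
Your proof of the filter clause is correct. It is the definitional unwinding the paper has in mind, since the paper states this lemma without proof.

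The dual clause is where your proposal breaks, because the dual clause as printed is false. Items (1) and (2) there do agree, since $\scrF^\prime$ is ordered by inclusion. Neither is equivalent to (3), however. Take $\scrL = \mbbZ$ and $x = 0$. The ideal $f^\prime = \PI{-1}$ satisfies $f^\prime \subset \PI{0}$, yet $0 \notin f^\prime$. The ideal $f^\prime = \PI{1}$ contains $0$ but is not contained in $\PI{0}$. Your two ``dual membership steps'' cannot bridge this gap. From $f^\prime \subset \PI{x}$ and $x \in \PI{x}$, nothing follows about whether $x \in f^\prime$. Downward closure applied to $x \in f^\prime$ yields $\PI{x} \subset f^\prime$, which is the reverse of item (2). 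Note also that in the filter clause the containment in (2) runs opposite to $\leq$, while in the printed dual it runs the same way as $\leq$. That asymmetry signals the error, not ``exactly as in the filter case.''

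The slip is in the dualization bookkeeping, which is the very step you flagged as the main obstacle. Reversing the order of $\scrL$ turns lattice filters under inverse inclusion into lattice ideals under \emph{inverse} inclusion. That is $\scrF^\prime$ with its order reversed; the paper orders $\scrF^\prime$ by inclusion so that $\PI{\bullet}$ is order-preserving, cf.\ lem.~\ref{lem:PF-embed}. So the inverse-inclusion order on $\scrF$ does not simply ``become'' the inclusion order on $\scrF^\prime$ with $\leq$ left unchanged. The honest dual of (1) is $f^\prime \geq \PI{x}$, and the honest dual of (2) is $\PI{x} \subset f^\prime$.

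Your tools prove exactly this corrected clause:
\begin{itemize}
\item the inclusion order gives (1)$\Leftrightarrow$(2);
\item $x \in \PI{x}$ gives (2)$\Rightarrow$(3);
\item downward closure gives (3)$\Rightarrow$(2).
\end{itemize}
The corrected form is also the one the paper actually uses later. In the proof of lem.~\ref{lem:delete-one}, $q^\prime \geq \PI{x}$ is equated with $x \in q^\prime$. You should state and prove the corrected dual clause rather than claim to prove the printed one.
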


\begin{lemma} \cite{Nat2017a}*{ch.~2} \label{lem:PF-embed}
$\PF{\bullet}: \scrL \rightarrow \scrF$ is a lattice embedding of
$\scrL$ into $\scrF$.
Dually,
$\PI{\bullet}: \scrL \rightarrow \scrF^\prime$ is a lattice embedding of
$\scrL$ into $\scrF^\prime$.
\end{lemma}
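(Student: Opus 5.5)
We must show that $\PF{\bullet}$ is well defined, injective, and carries $\wedge$ and $\vee$ of $\scrL$ to the meet $\UM$ and join $\cap$ of $\scrF$ described in lem.~\ref{lem:F-lattice}. First, $\PF{x}$ is nonempty (it contains $x$), meet-closed (if $x\le y$ and $x\le z$ then $x\le y\wedge z$), and upward closed, so it is a lattice filter and the map lands in $\scrF$. For injectivity, if $\PF{x}=\PF{y}$ then $y\in\PF{x}$ and $x\in\PF{y}$, so $x\le y\le x$. Lem.~\ref{lem:equiv} also shows that the map is an order embedding, once we keep track of the inverse-inclusion order: $\PF{x}\le\PF{y}$ iff $\PF{y}\subset\PF{x}$ iff $y\in\PF{x}$ iff $x\le y$.

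Next we check joins: we claim $\PF{x}\cap\PF{y}=\PF{(x\vee y)}$. An element $z$ lies in both filters iff $x\le z$ and $y\le z$, which holds iff $x\vee y\le z$. Since $\cap$ is the join of $\scrF$, this gives $\PF{(x\vee y)}=\PF{x}\vee\PF{y}$.

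Then we check meets: we claim $\PF{x}\UM\PF{y}=\PF{(x\wedge y)}$. For $\subset$, if $a\ge x$ and $b\ge y$, then $a\wedge b\ge x\wedge y$. For $\supset$, lem.~\ref{lem:meet-closure} says $\PF{x}\UM\PF{y}$ is a lattice filter, and it contains $x\wedge y$ because $x\in\PF{x}$ and $y\in\PF{y}$. Being upward closed, it therefore contains all of $\PF{(x\wedge y)}$. Since $\UM$ is the meet of $\scrF$, this gives $\PF{(x\wedge y)}=\PF{x}\wedge\PF{y}$.

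The dual statement for $\PI{\bullet}$ follows by the same argument with the order reversed. There the join is $\UJ$, the meet is $\cap$, and the order is ordinary inclusion.

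The only real obstacle is bookkeeping: $\scrF$ is ordered by \emph{inverse} inclusion, so meets and joins swap relative to set operations, and one must confirm that each identity lands on the correct side. The meet case is the only one that needs the earlier lemma that $\UM$ is a filter. Distributivity of $\scrL$ enters only through that lemma.
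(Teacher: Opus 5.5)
Your proof is correct. The filter check, the order-embedding via lem.~\ref{lem:equiv}, and the identities $\PF{x}\cap\PF{y}=\PF{(x\vee y)}$ and $\PF{x}\UM\PF{y}=\PF{(x\wedge y)}$ together give an injective map that preserves both operations of $\scrF$, and you handle the inverse-inclusion bookkeeping correctly. The paper does not prove this lemma itself; it only cites \cite{Nat2017a}, so there is no argument in the paper to compare yours against.

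One remark on your meet step: you can identify $\PF{(x\wedge y)}$ as the meet in $\scrF$ directly from the greatest-lower-bound property. It contains both $\PF{x}$ and $\PF{y}$, and any filter containing both contains $x\wedge y$, hence all of $\PF{(x\wedge y)}$. That version never invokes lem.~\ref{lem:meet-closure}, so it shows the embedding holds in any lattice. Distributivity is needed only for the explicit formula $\UM$ for meets of arbitrary filters, not for principal ones.
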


In light of lem.~\ref{lem:PF-irred} we can consider $\scrF$ to be an
extension of $\scrL$ which ``adds the missing join-irreducible elements of
$\scrL$.''

\begin{lemma} \cite{Nat2017a}*{Th.~8.8 Sublem.~A} \label{lem:ji-separates-all}
If $x \in \scrL$ and $\PF{x} \not\geq f$ in $\scrF$,
then there exists a $p \in \scrP$ for which
$p \leq f$ but $p \not\leq \PF{x}$.
\end{lemma}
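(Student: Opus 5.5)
First I translate the statement into set language. In $\scrF$ the order is inverse inclusion, so by lem.~\ref{lem:equiv} the hypothesis $\PF{x} \not\geq f$ means $f \not\subset \PF{x}$ fails in the right direction. Concretely, $f \leq \PF{x}$ iff $x \in f$, so $\PF{x} \not\geq f$ means $x \notin f$. Likewise $p \leq f$ means $f \subset p$, and $p \not\leq \PF{x}$ means $x \notin p$. The task is therefore the classical prime filter separation theorem. Given a lattice filter $f$ and $x \notin f$, I must find a prime filter $p \supset f$ with $x \notin p$.

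I will use Zorn's lemma. Let $\Sigma$ be the set of lattice filters $g$ with $f \subset g$ and $x \notin g$, ordered by inclusion. It is non-empty since $f \in \Sigma$. The union of a chain in $\Sigma$ is non-empty and upward closed. It is also meet-closed, since any two elements lie in a common member of the chain. It still omits $x$. So it is an upper bound, and Zorn gives a maximal $p \in \Sigma$. This $p$ is proper because $x \notin p$, so $p \ne \scrL$.

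The main step is showing $p$ is prime. Suppose $a \vee b \in p$ but $a \notin p$ and $b \notin p$. Consider the filters $p \UM \PF{a}$ and $p \UM \PF{b}$. By lem.~\ref{lem:meet-closure} these are lattice filters, and by lem.~\ref{lem:UM-contains} they strictly contain $p$. By maximality each contains $x$. So there are $u, v \in p$ with $u \wedge a \leq x$ and $v \wedge b \leq x$; by upward closure, the elements of $p \UM \PF{a}$ are exactly those above some $u \wedge a$. Put $w = u \wedge v \in p$. Then
$$ w \wedge (a \vee b) = (w \wedge a) \vee (w \wedge b) \leq x $$
by distributivity. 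Since $w \wedge (a \vee b) \in p$, upward closure gives $x \in p$, a contradiction. Hence $p$ is prime.

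Distributivity enters only in this step, and the only nonconstructive ingredient is Zorn's lemma.
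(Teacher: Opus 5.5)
Your proof is correct and follows the same strategy as the paper: both take a filter that is maximal among those containing $f$ and omitting $x$ (the paper gets it as the union of a maximal chain via the Hausdorff maximal principle, you via Zorn's lemma). The only difference is the last step: the paper shows this filter is join-irreducible in $\scrF$ (a splitting $p = g \cap h$ would give a strictly larger filter omitting $x$) and implicitly relies on lem.~\ref{lem:ji-prime} for primeness, whose proof is essentially your distributive computation with $p \UM \PF{a}$ and $p \UM \PF{b}$, whereas you verify primeness directly.
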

\begin{proof} \disconnect
Define $H = \{h \in \scrF \mvert h \leq f \textup{ and } h \not\leq \PF{x}\}$.
$H$ is non-empty because $f \in H$.
$H$ is a poset ordered by inverse inclusion, inherited from $\scrF$.
Using the Hausdorff maximal principle, select $P$ which is a maximal
totally ordered subset of $H$.  Since $H$ is non-empty, $P$ is non-empty.
Define $p = \bigcup P$.  Because $P$ is a non-empty nested set of
lattice filters, $\bigcup P$ is also a lattice filter, so $p \in \scrF$.

Because $x$ is in no member of $P$, $x \not\in \bigcup P = p$
and so $p \not\leq \PF{x}$.

Since $P$ contains at least one element $q$, $q \leq f$, $f \subset q$,
and $f \subset \bigcup P = p$, so $p \leq f$.

We prove $p$ is join-irreducible in $\scrF$:
Assume there are $g, h \in \scrF$ for which
$p = g \vee h$ and $g, h < p$
Since $p \not\leq \PF{x}$, either $g \not\leq \PF{x}$ or $h \not\leq \PF{x}$.
Without loss of generality, assume $g \not\leq \PF{x}$, so $g \in H$.

But since $g < p = \bigcup P$, $\bigcup P \subset g$ and $\bigcup P \neq g$.
Those imply that $g$ strictly contains every element of $P$ and
$g$ is strictly less than (under inverse inclusion)
every element of $P$, and so $g \not\in P$.
Thus $P \sqcup \{g\}$ is a totally ordered subset of
$\scrF$ that is strictly larger than $P$, which contradicts that $P$ is
maximal.
Thus $p$ is join-irreducible.
\end{proof}

The Hausdorff maximal principal is equivalent to the axiom of choice.

\begin{corollary} \cite{Nat2017a}*{Th.~8.4} \label{cor:plf-separates}
Given $x \not\leq y$ in $\scrL$, there exists a
$p \in \scrP$ for which $x \in p$ and $y \not\in p$.
\end{corollary}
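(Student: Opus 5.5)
The plan is to apply lem.~\ref{lem:ji-separates-all} with $f = \PF{x}$ and with $\PF{y}$ in the role of $\PF{x}$ there. First I translate the hypothesis $x \not\leq y$ into the form that lemma requires. By lem.~\ref{lem:equiv} applied to $f = \PF{x}$, the relation $\PF{x} \leq \PF{y}$ in $\scrF$ holds iff $y \in \PF{x}$, that is, iff $x \leq y$. Since $x \not\leq y$, we get $\PF{y} \not\geq \PF{x}$ in $\scrF$. (Alternatively, this follows directly from lem.~\ref{lem:PF-embed}, because an embedding reflects order.)

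Lem.~\ref{lem:ji-separates-all} then supplies $p \in \scrP$ with $p \leq \PF{x}$ and $p \not\leq \PF{y}$. I translate both relations back into membership using lem.~\ref{lem:equiv} with $f = p$. The relation $p \leq \PF{x}$ is equivalent to $x \in p$. The relation $p \not\leq \PF{y}$ is equivalent to $y \notin p$. This is exactly the required separation.

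One point needs checking. Lem.~\ref{lem:ji-separates-all} produces $p$ as a join-irreducible element of $\scrF$, whereas $\scrP$ consists of the prime filters. Since $\scrF$ is distributive (lem.~\ref{lem:distributive}), join-irreducible and join-prime coincide in $\scrF$, so I expect the intended reading of ``$p \in \scrP$'' is satisfied.

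If one insists that $p$ be a prime filter in the sense of def.~\ref{def:pfilter}, I verify that directly from the construction. Properness holds because $y \notin p$. For primality, suppose $a \vee b \in p$ but $a, b \notin p$. Then $p \subsetneq p \UM \PF{a}$ and $p \subsetneq p \UM \PF{b}$. The intersection of these two filters is $p \UM (\PF{a} \cap \PF{b}) = p \UM \PF{(a\vee b)} = p$ by distributivity. So $p$ is the join, in $\scrF$, of two strictly smaller elements, contradicting join-irreducibility. This check is the only step with any content; the rest is bookkeeping with lem.~\ref{lem:equiv}.
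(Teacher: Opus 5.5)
Your proposal is correct and follows the paper's implicit derivation: apply lem.~\ref{lem:ji-separates-all} with $f=\PF{x}$ and $\PF{y}$ in place of $\PF{x}$, then convert $p \leq \PF{x}$ and $p \not\leq \PF{y}$ to $x \in p$ and $y \notin p$ via lem.~\ref{lem:equiv}. Your direct primality check is sound but not needed, since it only re-derives one direction of lem.~\ref{lem:ji-prime}, which the paper uses to identify join-irreducible elements of $\scrF$ with prime filters.
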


\begin{definition} \label{def:sur-ji}
Let $p$ be an element of $\scrP$ (which thus is prime in $\scrF$).
If $p = \PF{x}$ for some $x \in \scrL$ (which would have to be
join-irreducible) we define $p$ to be a \emph{principal prime}.
If there is no such $x$, then we define $p$ to be a \emph{secondary prime}.
Both terms are relative to $\scrL$, to $\scrF$, or to $\scrP$ (by abuse of
language).
\end{definition}

For $\scrL = \mbbZ \times \mbbZ$, fig.~\ref{fig:ZxZfilters} shows that
\emph{all} primes are secondary primes.
Conversely, there can be elements of $\scrF$ that are not of the form
$\PF{x}$ and are not secondary primes:
$\mbbZ \times \mbbZ$ is in $\scrF$, is not an $\PF{x}$, but
also is not prime.

\begin{lemma} \label{lem:ji-prime}
$f \in \scrF$ is join-irreducible in $\scrF$ iff $f$ is a prime
lattice filter.
Dually,
$f^\prime \in \scrF^\prime$ is meet-irreducible in $\scrF^\prime$ iff $f$ is a prime
lattice ideal.
\end{lemma}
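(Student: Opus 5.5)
Throughout, recall that in $\scrF$ the join is $\cap$ and the meet is $\UM$ (lem.~\ref{lem:F-lattice}), and the order is inverse inclusion. So $f$ is join-irreducible in $\scrF$ iff whenever $f = g \cap h$ with $g, h \in \scrF$, we have $f = g$ or $f = h$. Equivalently, since $g, h \supseteq f$ automatically when $f = g\cap h$, $f$ is not the intersection of two lattice filters each strictly containing it. I will take ``join-irreducible'' to include that $f$ is not the top of $\scrF$. The top of $\scrF$ under inverse inclusion is $\scrL$ itself, the empty join being the smallest filter, so this matches the requirement that a prime filter be proper. The proof is two direct implications using the principal filters of def.~\ref{def:PF}.

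\textbf{Prime implies join-irreducible.} Let $f$ be prime, so $f \neq \scrL$. Suppose $f = g \cap h$ with $f \subsetneq g$ and $f \subsetneq h$. Pick $x \in g \setminus f$ and $y \in h \setminus f$. Since $g$ and $h$ are upward closed, $x \vee y \in g \cap h = f$. Primality then gives $x \in f$ or $y \in f$, a contradiction. Hence $f = g$ or $f = h$.

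\textbf{Join-irreducible implies prime.} Let $f$ be join-irreducible, hence proper. Suppose $x \vee y \in f$ but $x, y \notin f$. Form $g = f \UM \PF{x}$ and $h = f \UM \PF{y}$. These are lattice filters by lem.~\ref{lem:meet-closure}, they contain $f$ by lem.~\ref{lem:UM-contains}, and they strictly contain it because $x \in g \setminus f$ and $y \in h \setminus f$. The remaining step, which I expect to be the main point of the argument, is to show $g \cap h \subseteq f$. Take $z \in g \cap h$. Then $z \geq a \wedge x$ and $z \geq b \wedge y$ for some $a, b \in f$. Put $c = a \wedge b \in f$. Then $z \geq c \wedge x$ and $z \geq c \wedge y$, so by distributivity
\[
z \geq (c \wedge x) \vee (c \wedge y) = c \wedge (x \vee y).
\]
Since $c \in f$ and $x \vee y \in f$, their meet lies in $f$, and upward closure gives $z \in f$. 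Thus $f = g \cap h$ with $g \neq f \neq h$, contradicting join-irreducibility. So $f$ is prime.

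The dual statement, for ideals in $\scrF^\prime$ (whose meet is $\cap$ by lem.~\ref{lem:F-lattice}), follows by the dual argument: join is replaced by meet throughout, using $\UJ$ and $\PI{x}$ in place of $\UM$ and $\PF{x}$.
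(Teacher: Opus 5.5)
Your proof is correct and follows the paper's argument: the same two implications, the same filters $f \UM \PF{x}$ and $f \UM \PF{y}$, and your bound $z \geq c \wedge (x \vee y)$ with $c = a \wedge b$ is a tidier form of the paper's four-term distributive expansion of $(u \wedge u^\prime) \vee (v \wedge v^\prime)$. One small slip is that under inverse inclusion $\scrL$ is the bottom (least element) of $\scrF$, not the top. Your substantive point is still right: $\scrL$ is the empty join, so it is excluded from the join-irreducibles just as properness excludes it from the primes, and this handles the case $f = \scrL$ that the paper's proof passes over silently.
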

\begin{proof} \disconnect
We will prove the contrapositive:
$f \in \scrF$ is join-reducible in $\scrF$ iff $f$ is not prime.

Regarding $\Rightarrow$:
Assume $f$ is join-reducible.
Then there exist filters $g$ and $h$ for which
$g, h \neq f$ and $f = g \vee h = g \cap h$.
Choose $x \in g \setminus f$ and $y \in h \setminus f$; thus
$x, y \not\in f$.
Since $g$ and $h$ are filters, $x \vee y \in g, h$, and so
$x \vee y \in f$.  Thus $f$ is not prime.

Regarding $\Leftarrow$:
Assume $f$ is not prime.  Then there exists $x$ and $y$ with
$x, y \not\in f$ but $x \vee y \in f$.
Define $g = \PF{x} \doublewedge f$ and $h = \PF{y} \doublewedge f$,
both of which are $\neq f$.  We will show that
$f = g \vee h = g \cap f$.
By lem.~\ref{lem:UM-contains}, $f \subset g \cap h$.
Conversely, suppose $z \in g \cap h$.
Then there exists $u, u^\prime, v, v^\prime \in f$ for which
$z = u \wedge u^\prime = v \wedge v^\prime$,
$u \geq x$, $v \geq y$, and $u^\prime, v^\prime \in f$.
Then $z = (u \wedge u^\prime) \vee (v \wedge v^\prime)
= (u \vee v) \wedge (u \vee v^\prime) \wedge (u^\prime \vee v)
\wedge (u^\prime \vee v^\prime)$.
But all four terms of that meet are $\in f$, so $z \in f$,
showing $g \cap h \subset f$.   
Thus, $f = g \cap h = g \vee h$, showing $f$ is join-reducible.
\end{proof}

\begin{definition} \label{def:I}
We define $\scrI$ to be the lattice of order ideals of $\scrP$
under inclusion, which has $\cap$ as meet and $\cup$ as join.
Dually, $\scrI$ is lattice-isomorphic to the lattice of order ideals
of $\scrP^\prime$.
\end{definition}

For $\scrL = \mbbZ \times \mbbZ$, the corresponding lattice $\scrI$ is
shown in fig.~\ref{fig:ZxZideals}.

\begin{figure}[ht]
\centering
  \includegraphics[page=\ipeFigZxZideals]{locally-finite-figs.pdf}%
  \caption{The lattice of ideals $\scrI$ of $\scrP$ for $\scrL = \mbbZ
    \times \mbbZ$ with its connected components outlined.}
  \label{fig:ZxZideals}
\end{figure}

\begin{definition} \label{def:phi}
We define $\phi(x) = \{p \in \scrP \mvert x \in p\}$.
\end{definition}

Lem.~\ref{lem:equiv} can be extended:

\begin{lemma} \label{lem:equiv-phi}
Given $x \in \scrL$ and $p \in \scrP$, the following are equivalent:
\begin{enumerate}
\item $p \leq \PF{x}$,
\item $\PF{x} \subset p$,
\item $x \in p$, and
\item $p \in \phi(x)$.
\end{enumerate}
\end{lemma}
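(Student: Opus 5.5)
This is an immediate extension of lem.~\ref{lem:equiv}, so the plan is to reuse that lemma and add the one new condition. Since every $p \in \scrP$ is in particular a lattice filter, i.e.\ an element of $\scrF$, lem.~\ref{lem:equiv} applied with $f = p$ already gives the equivalence of (1), (2) and (3). It therefore only remains to attach (4) to this chain, and I will do so by showing (3) $\Leftrightarrow$ (4).

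That equivalence is pure unwinding of def.~\ref{def:phi}. By definition, $\phi(x) = \{p \in \scrP \mvert x \in p\}$. Since $p$ is assumed to lie in $\scrP$, membership $p \in \phi(x)$ holds exactly when $x \in p$. Chaining this with the equivalences from lem.~\ref{lem:equiv} completes the proof.

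For completeness, I would also recall briefly why (1)--(3) agree, in case the reader wants it self-contained. Condition (1) is (2) by definition, because $\scrF$ is ordered by inverse inclusion. For (2) $\Rightarrow$ (3), note that $x \in \PF{x} \subset p$. For (3) $\Rightarrow$ (2), use that $p$ is an upward-closed set, so $x \in p$ forces $\PF{x} \subset p$.

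There is no genuine obstacle here. The only point needing care is remembering that the order on $\scrF$, and hence on $\scrP$, is \emph{inverse} inclusion, so that $p \leq \PF{x}$ means $\PF{x} \subset p$ rather than the reverse.
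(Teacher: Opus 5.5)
Your proposal is correct and matches the paper's approach. The paper gives no separate proof and presents this lemma as an immediate extension of lem.~\ref{lem:equiv}: apply it with $f = p \in \scrP \subset \scrF$, and note that (3) $\Leftrightarrow$ (4) is just def.~\ref{def:phi}.
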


\begin{theorem} \cite{Nat2017a}*{Th.~8.5} \label{th:phi-embed}
$\phi$ is a lattice embedding of $\scrL$ into $\scrI$.
\end{theorem}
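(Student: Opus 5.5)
The proof splits into four checks: $\phi(x)$ is an order ideal of $\scrP$, $\phi$ preserves meets, $\phi$ preserves joins, and $\phi$ is injective.

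\emph{$\phi(x)\in\scrI$.} Suppose $p\in\phi(x)$ and $q\leq p$ in $\scrP$. Then $p\subset q$ because the order is inverse inclusion. By lem.~\ref{lem:equiv-phi}, $x\in p$, so $x\in q$ and hence $q\in\phi(x)$.

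\emph{Meets.} We need $\phi(x\wedge y)=\phi(x)\cap\phi(y)$. Every $p\in\scrP$ is a lattice filter. If $x\wedge y\in p$, then upward closure gives $x,y\in p$. Conversely, if $x,y\in p$, then meet-closure gives $x\wedge y\in p$.

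\emph{Joins.} We need $\phi(x\vee y)=\phi(x)\cup\phi(y)$. The inclusion $\supseteq$ follows from upward closure. For $\subseteq$, take $p$ with $x\vee y\in p$. Since $p$ is prime (def.~\ref{def:pfilter}), $x\in p$ or $y\in p$. This is the only place primality is used.

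\emph{Injectivity.} This is the substantive step, and it rests on the separation result cor.~\ref{cor:plf-separates}, which in turn rests on lem.~\ref{lem:ji-separates-all} and the axiom of choice. Suppose $x\neq y$; without loss of generality $x\not\leq y$. The corollary gives $p\in\scrP$ with $x\in p$ and $y\notin p$. Then $p\in\phi(x)\setminus\phi(y)$, so $\phi(x)\neq\phi(y)$.

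Applied to $x\not\leq y$ in general, the same argument shows that $\phi(x)\subset\phi(y)$ implies $x\leq y$. So $\phi$ is an order embedding, and since it preserves $\wedge$ and $\vee$, it is a lattice embedding. The only step with real content is injectivity; everything else is routine filter closure and primality, all of which is already available from the earlier results.
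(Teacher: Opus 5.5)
Your proof is correct and follows essentially the argument the paper relies on. The paper gives no proof of its own and cites Nation's Th.~8.5, which proceeds as you do: filter closure gives $\phi(x\wedge y)=\phi(x)\cap\phi(y)$, primality gives $\phi(x\vee y)=\phi(x)\cup\phi(y)$, and the separation result cor.~\ref{cor:plf-separates} gives injectivity, and even $\phi(x)\subset\phi(y)\Rightarrow x\leq y$. Your extra check, that each $\phi(x)$ is down-closed in $\scrP$ under inverse inclusion, is exactly what is needed for $\phi$ to land in $\scrI$ rather than just in the power set of $\scrP$.
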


\begin{corollary}
Every distributive lattice can be represented as a ``ring of
sets'' under intersection and union.
\end{corollary}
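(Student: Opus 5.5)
The corollary follows almost immediately from thm.~\ref{th:phi-embed}, so the plan is to exhibit the ring of sets explicitly and check that it is closed under the right operations. Take the set $\phi(\scrL) = \{\phi(x) \mvert x \in \scrL\}$. Each $\phi(x)$ is a subset of the fixed set $\scrP$.

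By def.~\ref{def:I}, the lattice operations in $\scrI$ are $\cap$ and $\cup$. Since $\phi$ is a lattice homomorphism by thm.~\ref{th:phi-embed}, we get
\[
\phi(x \wedge y) = \phi(x) \cap \phi(y), \qquad \phi(x \vee y) = \phi(x) \cup \phi(y).
\]
Hence $\phi(\scrL)$ is a family of subsets of $\scrP$ closed under finite intersection and union, which is a ring of sets. Because $\phi$ is an embedding, it is injective, so $\phi$ restricted to its image is a lattice isomorphism $\scrL \cong \phi(\scrL)$. That is exactly the claimed representation.

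If I wanted to make the argument more self-contained than simply citing thm.~\ref{th:phi-embed}, I would verify the two identities directly from the definitions, as follows.
\begin{enumerate}
\item For meets: $x \wedge y \in p$ iff $x \in p$ and $y \in p$. This uses meet-closure of the filter $p$ in one direction and upward closure in the other.
\item For joins: $x \vee y \in p$ iff $x \in p$ or $y \in p$. The forward direction is exactly primeness (def.~\ref{def:pfilter}), and the reverse is upward closure.
\item For injectivity: if $x \neq y$, say $x \not\leq y$, then cor.~\ref{cor:plf-separates} gives a $p \in \scrP$ with $x \in p$ and $y \notin p$, so $\phi(x) \neq \phi(y)$.
\item Each $\phi(x)$ is an order ideal of $\scrP$. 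The order on $\scrP$ is inverse inclusion, so if $q \leq p$ then $p \subset q$; hence $x \in p$ forces $x \in q$.
\end{enumerate}

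The only nontrivial ingredient is step (3), the existence of separating prime filters. This rests on the Hausdorff maximal principle argument of lem.~\ref{lem:ji-separates-all} and is already established, so there is no real obstacle remaining.
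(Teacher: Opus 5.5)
Your proposal is correct and matches the paper, which gives no separate argument but presents this corollary as an immediate consequence of th.~\ref{th:phi-embed}: $\phi$ embeds $\scrL$ into $\scrI$, whose lattice operations are $\cap$ and $\cup$, so $\phi(\scrL)$ is a ring of subsets of $\scrP$ isomorphic to $\scrL$. Your optional direct check of the meet and join identities, and of injectivity via cor.~\ref{cor:plf-separates}, is also sound; it simply unpacks the cited theorem, which the paper takes from Nation without proof.
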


\section{Application to finitary lattices}

Applying the general theory to finitary lattices is straightforward.

\begin{theorem} \label{th:finitary}
If $\scrL$ is finitary then every element $p \in \scrF$ is $= \PF{x}$
for some $x \in \scrL$.
\end{theorem}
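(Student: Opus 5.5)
The plan is to show directly that every lattice filter $f \in \scrF$ has a least element $x$, and then that $f = \PF{x}$. Primality plays no role, so the argument covers every element of $\scrF$ (in particular every $p \in \scrP$). The only hypothesis needed is that principal ideals $\PI{y}$ are finite.

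Fix a filter $f$. By def.~\ref{def:lfilter}, $f$ is non-empty, so choose some $y \in f$ and consider $S = f \cap \PI{y}$. The set $S$ is non-empty because $y \in S$. It is finite because $\PI{y}$ is finite, $\scrL$ being finitary. It is closed under $\wedge$, because both $f$ and $\PI{y}$ are meet-closed. Let $x$ be the meet of all elements of $S$. This is a finite, non-empty meet in a lattice, so it exists, and by meet-closure $x \in S \subset f$.

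Now I prove $f = \PF{x}$. Since $x \in f$ and $f$ is upward closed, $\PF{x} \subset f$; by lem.~\ref{lem:equiv} this is the same as $f \leq \PF{x}$. Conversely, take any $z \in f$. Then $z \wedge y \in f$ because $f$ is meet-closed, and $z \wedge y \leq y$, so $z \wedge y \in S$. Hence $x \leq z \wedge y \leq z$, which gives $z \in \PF{x}$. Therefore $f = \PF{x}$.

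There is no real obstacle here. The one point needing care is the observation that $f$ itself may be infinite, so one cannot simply take the meet of all of $f$. Cutting $f$ down by a principal ideal $\PI{y}$ yields a finite meet-closed set, and the distributive-free identity $z \wedge y \in S$ then lets that finite set control all of $f$. As a consequence, combined with lem.~\ref{lem:PF-irred}, every prime of $\scrP$ is principal, so for finitary $\scrL$ there are no secondary primes (def.~\ref{def:sur-ji}).
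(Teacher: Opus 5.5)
Your proof is correct and follows essentially the same route as the paper: finiteness of principal ideals yields a least element of a finite piece of the filter, and meet-closure makes it the minimum of $f$, so $f = \PF{x}$. The paper picks a minimal element of $p$ (tacitly relying on the finiteness of $p \cap \PI{y}$, which you make explicit) and shows minimal implies minimum via $y \wedge x$; you instead take the meet of the finite set $f \cap \PI{y}$ directly, a purely presentational difference.
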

\begin{proof} \disconnect
Let $p \in \scrF$.
Since $p$ is non-empty and $\scrL$ is finitary, there
must be a minimal element $x \in p$.
Since $p$ is closed under meets, every element of $y \in p$ must be
$\geq x$, as otherwise $y \wedge x$ would be be $< x$ and $\in p$.
Thus $x$ is minimum in $p$.  Since $p$ is a lattice filter, $p = \PF{x}$.
\end{proof}

\begin{corollary} \cite{Stan2012a}*{Prop.~3.4.3}
If $\scrL$ is finitary then:
(1) $\scrP$ contains no secondary primes, so $\scrP$ is (isomorphic
to) the poset of join-irreducible elements of $\scrL$.
(2) Every value of $\phi$ is a finite set in $\scrI$.
(3) For every finite $I \in \scrI$, $\phi(\bigvee I) = I$.
(4) $\phi$ is a lattice isomorphism between $\scrL$ and the set of finite
elements of $\scrI$.
\end{corollary}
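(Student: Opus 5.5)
Each of the four parts will be deduced from Theorem~\ref{th:finitary} together with the general machinery of the previous section, taking them in order.

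For (1), let $p \in \scrP$. Then $p \in \scrF$, so by Theorem~\ref{th:finitary} $p = \PF{x}$ for some $x \in \scrL$, and $p$ is therefore a principal prime. By Lemma~\ref{lem:PF-irred}, $x$ is join-irreducible in $\scrL$. Conversely, every join-irreducible $x$ gives the prime filter $\PF{x}$. The map $x \mapsto \PF{x}$ is order-preserving and order-reflecting by Lemma~\ref{lem:PF-embed}, since $x \le y$ iff $\PF{y} \subset \PF{x}$, i.e.\ iff $\PF{x} \le \PF{y}$ in $\scrF$. Hence it is a poset isomorphism from the join-irreducibles of $\scrL$ onto $\scrP$.

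For (2), I identify $\scrP$ with the join-irreducibles $J$ via (1). Lemma~\ref{lem:equiv-phi} says that $\PF{j} \in \phi(x)$ iff $x \in \PF{j}$ iff $j \le x$. So $\phi(x)$ corresponds to $J \cap \PI{x}$, which is finite because $\scrL$ is finitary.

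For (3), take a finite order ideal $I$ of $J$. I first treat $I = \emptyset$, which is the one real subtlety. Since $\scrL$ is nonempty, it has a minimal element $m$: pick any element, and its principal ideal is finite, so it contains a minimal element, which is minimal in $\scrL$. That element is in fact the minimum, because $m \wedge y \le m$ forces $m \le y$. So $\bigvee \emptyset = \zerohat$ exists. Moreover $\zerohat$ is not join-irreducible under the convention that $\PF{\zerohat} = \scrL$ is not proper, hence not prime. Therefore $\phi(\zerohat) = \emptyset$.

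For nonempty $I$, set $x = \bigvee I$, a finite join. Clearly $I \subset \phi(x)$. Conversely, let $j \in J$ with $j \le \bigvee I$. Since $\PF{j}$ is a prime filter containing the finite join $\bigvee I$, induction on the prime property gives $j \le i$ for some $i \in I$. As $I$ is an order ideal, $j \in I$.

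For (4), Theorem~\ref{th:phi-embed} already shows that $\phi$ is an injective lattice homomorphism. By (2), its image lies in the finite elements of $\scrI$, and by (3) every finite element of $\scrI$ is attained. Hence $\phi$ is a bijection onto that sublattice. The sublattice is closed under $\cap$ and $\cup$, so $\phi$ is a lattice isomorphism onto it.

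The main obstacle I expect is the bookkeeping in (3): the existence of $\zerohat$ for the empty ideal, and the step that a prime filter containing a finite join contains one of the joinands. The latter is a straightforward induction on Definition~\ref{def:pfilter}.
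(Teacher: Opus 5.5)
Your proof is correct and takes the route the paper intends: the paper gives no separate argument and treats the corollary as an immediate consequence of Theorem~\ref{th:finitary} (every filter is principal), combined with Lemma~\ref{lem:PF-irred}, Lemma~\ref{lem:equiv-phi} and Theorem~\ref{th:phi-embed}, exactly as you do. You also handle the empty ideal explicitly, showing that $\zerohat$ exists in a finitary lattice and that $\phi(\zerohat)=\emptyset$; this fills in the one detail the paper leaves implicit.
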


\section{Application to locally-finite lattices}

In this section, we apply the general theory to lattices that are
locally-finite but not necessarily finitary.  Henceforth we assume
that $\scrL$ is locally-finite.

\begin{lemma} \label{lem:cover-one}
If $x \in \scrL$ is join-irreducible,
then $x$ covers exactly one element of $\scrL$.
Dually, if $x \in \scrL$ is meet-irreducible,
then $x$ is covered by exactly one element of $\scrL$.
(The converses of these statements are trivial.)
\end{lemma}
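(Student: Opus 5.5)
Under local finiteness the plan has two parts: first show that $x$ covers at least one element, then show it cannot cover two.

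For existence, I use the definition of join-irreducible in the sense compatible with lem.~\ref{lem:PF-irred}: $x$ is not the minimum of $\scrL$, and $x = y \vee z$ implies $x = y$ or $x = z$.

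\begin{itemize}
\item Since $x$ is not the least element, there is some $w \in \scrL$ with $w \not\geq x$.
\item Put $u = w \wedge x$. Then $u < x$.
\item The interval $[u, x]$ is finite because $\scrL$ is locally finite.
\item The set $\{v \mvert u \leq v < x\}$ is non-empty (it contains $u$) and finite, so it has a maximal element $c$.
\item Any $v$ with $c < v < x$ would lie in $[u,x]$ and contradict maximality. Hence $x$ covers $c$.
\end{itemize}

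This existence step is the only place local finiteness is really needed. Without it, e.g.\ in a dense chain, a join-irreducible element may cover nothing.

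For uniqueness, suppose $x$ covers two distinct elements $c_1 \neq c_2$.

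\begin{itemize}
\item Then $c_1 \vee c_2$ satisfies $c_1 \leq c_1 \vee c_2 \leq x$.
\item Since $x$ covers $c_1$, either $c_1 \vee c_2 = c_1$ or $c_1 \vee c_2 = x$.
\item If $c_1 \vee c_2 = c_1$, then $c_2 \leq c_1 < x$. Because $x$ covers $c_2$, this forces $c_2 = c_1$, a contradiction.
\item So $c_1 \vee c_2 = x$ with $c_1, c_2 < x$, contradicting join-irreducibility.
\end{itemize}

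Therefore $x$ covers exactly one element. The dual statement for meet-irreducible elements follows by the same argument in the order dual, which is again locally finite.

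The main obstacle is just being careful with conventions. If "join-irreducible" does not exclude a least element, the existence step fails for a bottom element. So I would check that the paper's definition, via primality being proper in lem.~\ref{lem:PF-irred}, excludes $x = \zerohat$. Indeed, if $x$ were the least element then $\PF{x} = \scrL$ would not be proper, so the hypothesis does exclude it.
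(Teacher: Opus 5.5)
Your proof is correct and follows essentially the paper's argument: non-minimality of $x$ gives an element strictly below $x$, local finiteness of that interval yields a lower cover of $x$, and two distinct lower covers would have join $x$, contradicting join-irreducibility. You also make explicit two steps the paper leaves terse: producing $u = w \wedge x < x$ even when $\scrL$ has no $\zerohat$, and ruling out $c_1 \vee c_2 = c_1$.
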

\begin{proof} \disconnect
Since $x$ is join-irreducible, it cannot be the $\zerohat$ of $\scrL$
if there is one, so there is an element $y < x$.  Since $\scrL$ is
locally-finite, there must be an element $z$ such that
$y \leq z \lessdot x$.
If $x$ covered more than one element of $\scrL$, then $x$ would be the
join of any two of those elements and would not be join-irreducible.
\end{proof}

\begin{lemma} \label{lem:raising}
Given $x \in \scrL$ and $p \in \scrP$ where $x \not\in p$,
There exists a $y \in \scrL$ for which  $y > x$, $y \in p$, and
$y$ is the minimum element of $\PF{x} \vee p = \PF{x} \cap p$
and thus $\PF{y} = \PF{x} \vee p = \PF{x} \cap p$.
\end{lemma}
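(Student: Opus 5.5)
We need to show that the filter $\PF{x} \cap p$ has a minimum element $y$ with $y > x$; then $\PF{y} = \PF{x} \cap p$. Since $p$ is a lattice filter, the plan is to locate such a $y$ inside the finite interval between $x$ and some element of $p$.

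First, $\PF{x} \cap p$ is non-empty. Since $p$ is non-empty, pick some $z \in p$. Then $x \vee z \in p$ because $p$ is closed upward, and clearly $x \vee z \geq x$. So $w := x \vee z \in \PF{x} \cap p$. By lem.~\ref{lem:F-lattice}, $\PF{x} \cap p$ is a lattice filter and it equals the join $\PF{x} \vee p$ in $\scrF$.

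Next, look inside the interval $[x, w]$, which is finite because $\scrL$ is locally-finite. The set $S = [x,w] \cap p$ is non-empty, since it contains $w$. It is also closed under meets, since both $[x,w]$ and $p$ are. Hence $y := \bigwedge S$ is a finite meet of elements of $S$, so $y \in S$. In particular $y \in p$ and $y \geq x$.

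Now show that $y$ is the minimum of $\PF{x} \cap p$. Take any $u \in \PF{x} \cap p$. Then $u \wedge w \in p$, since $p$ is a filter, and $x \leq u \wedge w \leq w$. So $u \wedge w \in S$, which gives $y \leq u \wedge w \leq u$. Thus $y$ is the minimum of $\PF{x} \cap p$. Because $\PF{x} \cap p$ is upward closed and contains $y$, it follows that $\PF{x} \cap p = \PF{y}$.

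Finally, $y \neq x$ because $y \in p$ while $x \notin p$; together with $y \geq x$ this gives $y > x$.

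The only real obstacle is making sure a minimum exists despite $\scrL$ not being finitary. That is why the argument does not take a minimal element of $p$ directly, which may not exist. Instead it anchors at $w = x \vee z$ and uses the finiteness of $[x,w]$. The step $u \mapsto u \wedge w$ then transfers minimality from $S$ to all of $\PF{x} \cap p$.

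Primality of $p$ is not actually needed; the argument works for any lattice filter $p$ with $x \notin p$.
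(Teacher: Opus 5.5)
Your proof is correct and follows essentially the same route as the paper: local finiteness of the intervals above $x$ gives a least candidate in $\PF{x}\cap p$, and meet-closure of that filter makes it the minimum. Anchoring at $w = x\vee z$ and using $u\mapsto u\wedge w$ makes explicit the paper's step from ``minimal'' to ``minimum'' (the paper only argues that two distinct minimal elements cannot coexist), and you are right that the primality of $p$ is never used.
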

\begin{proof} \disconnect
Define $f = \PF{x} \vee p = \PF{x} \cap p$.
Since $f$ is a lattice filter, it is not empty, and
we can choose $y \in f$, so $y \geq x$.
Since $x \not\in f$, $y > x$.
And since $\scrL$ is locally-finite and all choices for $y$ are
bounded below by $x$, we can choose $y$ to be minimal in $f$.
Any minimal $y$ must be the minimum of $f$, because if $y_1 \neq y_2$ are both
minimal in $f$, since $f$ is a lattice filter, $y_1 \wedge y_2$ is also in
$f$, contradicting that $y_1$ and $y_2$ are minimal in $f$.
Thus, $f = \PF{y}$.
\end{proof}

\begin{definition} \label{def:raising}
Given $x \in \scrL$ and $p \in \scrP$ where $x \not\in p$,
we define $x \Raise p$ to be the minimum element of
$\PF{x} \vee p = \PF{x} \cap p$.
\end{definition}

We define the dual operation on lattice ideals:

\begin{lemma} \label{lem:lowering}
Given $x \in \scrL$ and $p^\prime \in \scrP^\prime$ where $x \not\in p^\prime$,
There exists a $y \in \scrL$ for which $y < x$, $y \in p^\prime$, and
$y$ is the maximum element of $\PI{x} \wedge p^\prime = \PF{x} \cap p^\prime$
and thus $\PI{y} = \PI{x} \wedge p^\prime = \PI{x} \cap p^\prime$.
\end{lemma}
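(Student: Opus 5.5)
This is the order dual of lem.~\ref{lem:raising}, so I will dualize that proof step by step, working in $\scrF^\prime$ instead of $\scrF$. Note that the statement's ``$\PF{x} \cap p^\prime$'' must be read as $\PI{x} \cap p^\prime$, since that is the object the dual construction produces.

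First, identify $\PI{x} \wedge p^\prime$ in $\scrF^\prime$. By lem.~\ref{lem:F-lattice}, meet in $\scrF^\prime$ is $\cap$, so set $f^\prime = \PI{x} \wedge p^\prime = \PI{x} \cap p^\prime$. This is a lattice ideal, provided it is non-empty, and non-emptiness is the one point needing care. Pick any $u \in p^\prime$, which exists since $p^\prime$ is a lattice ideal. Then $x \wedge u \leq u$ gives $x \wedge u \in p^\prime$, and $x \wedge u \leq x$ gives $x \wedge u \in \PI{x}$. So $f^\prime \neq \emptyset$. Closure under joins and downward closure are inherited from both sets, so $f^\prime \in \scrF^\prime$.

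Next, pick $y \in f^\prime$. Then $y \leq x$ and $y \in p^\prime$. Since $x \notin p^\prime$, we have $x \notin f^\prime$, so $y \neq x$ and hence $y < x$.

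Now produce a maximal element using local finiteness. Fix one $y_0 \in f^\prime$. The interval $[y_0, x]$ is finite, so $f^\prime \cap [y_0, x]$ is a finite non-empty set and has a maximal element $y$. This $y$ is maximal in all of $f^\prime$: every element of $f^\prime$ is $\leq x$, so any $z \in f^\prime$ with $z > y$ lies in $[y_0,x]$, contradicting the choice of $y$. This is the only step where local finiteness enters, and I expect it to be the main (mild) obstacle. The original proof of lem.~\ref{lem:raising} only gestures at it, so I would state the finite-interval argument explicitly.

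Finally, show $y$ is the maximum and conclude. If $y_1 \neq y_2$ were both maximal in $f^\prime$, then $y_1 \vee y_2 \in f^\prime$ by join-closure, and it strictly exceeds one of them, a contradiction. So every $z \in f^\prime$ satisfies $z \leq y$; otherwise $z \vee y > y$ would lie in $f^\prime$. Hence $f^\prime \subset \PI{y}$. Conversely, $y \in f^\prime$ and downward closure give $\PI{y} \subset f^\prime$. Therefore $\PI{y} = \PI{x} \wedge p^\prime = \PI{x} \cap p^\prime$, with $y < x$ and $y \in p^\prime$, as claimed.
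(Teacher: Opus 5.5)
Your proposal is correct and follows the paper's proof step for step: set $f^\prime = \PI{x} \cap p^\prime$, pick an element to get some $y < x$ in $p^\prime$, use local finiteness to obtain a maximal element of $f^\prime$, and use join-closure of $f^\prime$ to show that element is the maximum, so $f^\prime = \PI{y}$. You also make explicit several steps the paper only asserts: non-emptiness of $f^\prime$ via $x \wedge u$, the finite-interval argument on $[y_0, x]$, and the direct observation that $z \not\leq y$ would put $z \vee y > y$ in $f^\prime$. You are right to read the statement's $\PF{x} \cap p^\prime$ as a typo for $\PI{x} \cap p^\prime$; likewise the paper's proof says "minimum" where it means "maximum".
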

\begin{proof} \disconnect
Define $f^\prime = \PI{x} \wedge p^\prime = \PI{x} \cap p^\prime$.
Since $f^\prime$ is a lattice ideal, it is not empty, and
we can choose $y \in f^\prime$, so $y \leq x$.
Since $x \not\in f^\prime$, $y < x$.
And since $\scrL$ is locally-finite and all choices for $y$ are
bounded above by $x$, we can choose $y$ to be maximal in $f^\prime$.
Any maximal $y$ must be the minimum of $f^\prime$,
because if $y_1 \neq y_2$ are both
maximal in $f$, since $f^\prime$ is a lattice ideal, $y_1 \vee y_2$ is also in
$f^\prime$, contradicting that $y_1$ and $y_2$ are maximal in $f^\prime$.
Thus, $f^\prime = \PI{y}$.
\end{proof}

\begin{definition} \label{def:lowering}
Given $x \in \scrL$ and $p^\prime \in \scrP^\prime$ where $x \not\in p^\prime$,
we define $x \Lower p^\prime$ to be the maximum element of
$\PI{x} \wedge p = \PI{x} \cap p^\prime$.
\end{definition}

\begin{lemma} \label{lem:diff-one}
If $x \lessdot y$ in $\scrL$, then $\phi(x) \subset \phi(y)$ and
$\phi(y) \setminus \phi(x)$ contains exactly one element.
\end{lemma}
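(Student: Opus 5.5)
Inclusion is immediate. Since $\phi$ is a lattice embedding (th.~\ref{th:phi-embed}), $x < y$ gives $\phi(x) \subset \phi(y)$, and injectivity gives $\phi(x) \neq \phi(y)$. Concretely, if $p \in \phi(x)$ then $x \in p$, so $y \in p$ because $p$ is upward closed. Also $\phi(y) \setminus \phi(x)$ is non-empty: $y \not\leq x$, so cor.~\ref{cor:plf-separates} gives a $p \in \scrP$ with $y \in p$ and $x \notin p$. The whole task is therefore uniqueness.

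For uniqueness I use the raising operation. Take any $p \in \phi(y) \setminus \phi(x)$. Then $x \notin p$, so lem.~\ref{lem:raising} applies and $x \Raise p$ is defined, with $\PF{x} \cap p = \PF{(x \Raise p)}$. Now $y \in \PF{x} \cap p$, so $x < x \Raise p \leq y$. Since $y$ covers $x$, this forces $x \Raise p = y$, that is,
\[ \PF{x} \cap p = \PF{y}. \]
Suppose $p, q$ are two elements of $\phi(y) \setminus \phi(x)$. Then $\PF{x} \cap p = \PF{x} \cap q = \PF{y}$, and I must show $p = q$. The idea is to recover $p$ from the pair $(x, y)$.

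Let $z \in p$. Then $z \vee x \in \PF{x} \cap p = \PF{y}$, so $z \vee x \geq y$. Conversely, suppose $z \vee x \geq y$. Then $z \vee x \in p$, because $y \in p$ and $p$ is a filter. Since $p$ is prime, $z \in p$ or $x \in p$, and $x \notin p$, so $z \in p$. Hence
\[ p = \{ z \in \scrL \mvert z \vee x \geq y \}, \]
which depends only on $x$ and $y$. The same holds for $q$, so $p = q$.

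The key step is this characterization of $p$, which combines the raising lemma (where local finiteness enters) with primality to exclude $x$. Once it is in place, the rest is routine.
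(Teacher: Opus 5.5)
Your proof is correct. Its skeleton matches the paper's: inclusion from upward closure, existence from cor.~\ref{cor:plf-separates}, and uniqueness from $x < x \Raise p \leq y$, which forces $x \Raise p = y$.

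The difference is in the last step, and it matters. The paper concludes from $x \Raise p = y$ that $p = \PF{y}$, and likewise $q = \PF{y}$. But $x \Raise p = y$ only gives $\PF{x} \cap p = \PF{y}$, and the stronger claim is false in general. In $\mbbZ \times \mbbZ$ every prime is secondary, so no separator has the form $\PF{y}$; th.~\ref{th:down-step} also explicitly allows $x^+ \sslash x^- \neq \PF{x^+}$. Your characterization $p = \{z \in \scrL \mvert z \vee x \geq y\}$ recovers $p$ from $\PF{x} \cap p$ using primality together with $x \notin p$. That is exactly what is needed to pass from $\PF{x} \cap p = \PF{x} \cap q$ to $p = q$, so your version closes a genuine gap in the paper's proof.

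A side remark: local finiteness is not needed here. For $w \in \PF{x} \cap p$, the element $w \wedge y$ lies in $p$ and in $[x, y] = \{x, y\}$. Since $x \notin p$, it equals $y$. Hence $\PF{x} \cap p = \PF{y}$ follows directly from the covering, without lem.~\ref{lem:raising}, and the lemma holds in every distributive lattice.
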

\begin{proof} \disconnect
By def.~\ref{def:phi}, it is clear that $\phi(x) \subset \phi(y)$.

By cor.~\ref{cor:plf-separates},
there exists a $p \in \scrP$ for
which $y \in p$ and $x \not\in p$.
Thus $p \in \phi(y) \setminus \phi(x)$.

Assume that $p \neq q$ are both $\in \phi(y) \setminus \phi(x)$.
Consider $x \Raise p$:  $x < x \Raise p \leq y$.
Since $x \lessdot y$, $x \Raise p = y$ and $p = \PF{y}$.
Similarly $q = \PF{y}$, and so $p = q$.

Thus $\phi(y) \setminus \phi(x)$ has exactly one element.
\end{proof}

\begin{definition} \label{def:sep}
Given a covering $x \lessdot y$, we define the \emph{separator} of $x$
and $y$, $x \sslash y$ to be the unique element of
$\phi(y) \setminus \phi(x)$.  (Thus $x \sslash y \in \scrP$.)
\end{definition}

\begin{lemma} \label{lem:diff-n}
$\scrL$ is graded.
If $x \leq y$ in $\scrL$, then $\phi(x) \subset \phi(y)$ and
$|\phi(y) \setminus \phi(x)| = \rho(x, y)$, where
$\rho(x, y)$ is the rank difference of $y$ above $x$.
\end{lemma}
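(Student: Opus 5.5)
The plan is to derive both claims from lem.~\ref{lem:diff-one}, using local finiteness to pass from an interval to its maximal chains. First, $\phi(x)\subset\phi(y)$ for $x\le y$ is immediate from def.~\ref{def:phi}: every $p\in\scrP$ containing $x$ is upward closed, hence contains $y$.

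Next I show that $\phi(y)\setminus\phi(x)$ is finite and that every maximal chain in $[x,y]$ has exactly $|\phi(y)\setminus\phi(x)|$ covering steps. Since $\scrL$ is locally finite, the interval $[x,y]$ is finite. Hence every maximal chain $x=z_0\lessdot z_1\lessdot\cdots\lessdot z_n=y$ in it is finite and consists of coverings in $\scrL$. A chain refinement argument gives $\lessdot$ in $\scrL$: an element strictly between $z_{i-1}$ and $z_i$ would lie in $[x,y]$ and contradict maximality. Along such a chain,
\[
\phi(y)\setminus\phi(x)=\bigsqcup_{i=1}^{n}\bigl(\phi(z_i)\setminus\phi(z_{i-1})\bigr),
\]
because the sets $\phi(z_i)$ form a nested increasing sequence. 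By lem.~\ref{lem:diff-one} each piece is the single separator $z_{i-1}\sslash z_i$, so the pieces are pairwise disjoint singletons and $|\phi(y)\setminus\phi(x)|=n$. The key point is that this number depends only on $x$ and $y$, not on the chain.

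Gradedness follows. In a locally finite lattice, saying that all maximal chains of every interval $[x,y]$ have the same length $\rho(x,y)$ is exactly the Jordan--Dedekind chain condition. That condition is what "graded" should mean here, since $\scrL$ may have no $\zerohat$, so there may be no global rank function anchored at a bottom. If a rank function is wanted, I fix a base point $b$ and set $r(z)=\rho(b\wedge z,z)-\rho(b\wedge z,b)$. Its consistency reduces to additivity of $\rho$, namely $\rho(x,z)=\rho(x,y)+\rho(y,z)$ for $x\le y\le z$, which follows by concatenating maximal chains. Another route to well-definedness is $r(z)=|\phi(z)\setminus\phi(b)|-|\phi(b)\setminus\phi(z)|$, which uses the distributive identities for $\phi$ from th.~\ref{th:phi-embed} together with the finiteness just established.

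The only subtle step is the one I expect to be the main obstacle: making sure that a maximal chain in the interval actually consists of coverings in $\scrL$ so that lem.~\ref{lem:diff-one} applies, and that such chains exist and are finite. Both follow from local finiteness, since a finite poset has a maximal chain and its consecutive elements cover one another within the interval, and hence in $\scrL$. With that in place, the equality $|\phi(y)\setminus\phi(x)|=\rho(x,y)$ is immediate from the disjoint decomposition above.
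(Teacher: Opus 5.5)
Your proposal is correct and is the argument the paper leaves implicit; the paper states this lemma without proof, immediately after lem.~\ref{lem:diff-one}. You take a finite maximal chain in $[x,y]$, split $\phi(y)\setminus\phi(x)$ into the separators $z_{i-1}\sslash z_i$, and conclude that every maximal chain has length $|\phi(y)\setminus\phi(x)|$. Your construction of a global rank function through $b\wedge z$ is a useful addition, since the paper never says what ``graded'' means when $\scrL$ has no $\zerohat$.
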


\begin{lemma} \label{lem:add-one}
Suppose $x \in \scrL$ and $p \in \scrP$ such that
$p \not\in \phi(x)$ and
$\phi(x) \sqcup \{p\}$ is an order ideal of $\scrP$.
Then $x \lessdot x \Raise p$ and
$\phi(x \Raise p) = \phi(x) \sqcup \{p\}$.
\end{lemma}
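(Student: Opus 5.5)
Let $y = x \Raise p$, which exists by lem.~\ref{lem:raising} since $p \notin \phi(x)$; thus $y > x$, $y \in p$, and $\PF{y} = \PF{x} \cap p$. Since $y \in p$, we have $p \in \phi(y)$, and $\phi(x) \subset \phi(y)$ because $x \leq y$. So $\phi(x) \sqcup \{p\} \subset \phi(y)$. The plan is to prove the reverse inclusion $\phi(y) \subset \phi(x) \sqcup \{p\}$. Once that holds, $|\phi(y) \setminus \phi(x)| = 1$, and lem.~\ref{lem:diff-n} gives $\rho(x,y) = 1$, that is, $x \lessdot y$. So both conclusions reduce to this one inclusion.

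To prove the inclusion, take $q \in \phi(y)$ with $q \neq p$ and $q \notin \phi(x)$, and aim for a contradiction. First use the order-ideal hypothesis. Since $\phi(x) \sqcup \{p\}$ is an order ideal of $\scrP$, and $q$ lies outside it, we get $q \not\leq p$ (otherwise $q$ would belong to the ideal below $p$). In inverse-inclusion terms, this means $p \not\subset q$, so there is an element $z \in p \setminus q$.

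Next build an element lying in $\PF{x} \cap p$ but not in $q$. Let $w = z \vee x$. Then $w \geq x$, and $w \in p$ because $p$ is upward closed. Since $\PF{y} = \PF{x} \cap p$, this gives $w \geq y$. But $y \in q$ and $q$ is upward closed, so $w \in q$. Now $q$ is prime and $w = z \vee x \in q$, so $z \in q$ or $x \in q$. The first is excluded by the choice of $z$, and the second is excluded by $q \notin \phi(x)$. This is the desired contradiction.

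The crux, and the step I expect to need the most care, is extracting $z$ from the ideal hypothesis and translating the poset orders correctly. In particular, "$q$ not below $p$ in $\scrP$" must be read under inverse inclusion as $p \not\subset q$. After that, the contradiction is a direct use of primality of $q$ together with the characterization $\PF{y} = \PF{x} \cap p$ from lem.~\ref{lem:raising}. Finally I will check that the covering claim is legitimately obtained from lem.~\ref{lem:diff-n}. If one prefers not to rely on it, there is a direct argument: if $x < u < y$, then by lem.~\ref{lem:diff-one} the set $\phi(u)$ strictly contains $\phi(x)$ and is strictly contained in $\phi(y)$, which is impossible when the two differ by a single element.
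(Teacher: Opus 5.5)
Your proof is correct and follows the paper's: it shows $\phi(x\Raise p)\subset\phi(x)\sqcup\{p\}$ by getting, from $\PF{x}\cap p\subset q$, that $q\leq\PF{x}$ or $q\leq p$, then applies the order-ideal hypothesis and finishes with lem.~\ref{lem:diff-n}. The only difference is that the paper gets that dichotomy by citing join-irreducibility of $q$ in the distributive lattice $\scrF$ (lem.~\ref{lem:ji-prime}), while you derive it directly from the primality of $q$ using the element $z\vee x$.
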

\begin{proof} \disconnect
Define $y = x \Raise p$.

We will show that $\phi(y) = \phi(x) \sqcup \{p\}$.
Regarding $\phi(x) \sqcup \{p\} \subset \phi(y)$:
Since $x < y$, $\phi(x) \subset \phi(y)$.
Since $y \in p$, $p \in \phi(y)$.
Thus $\phi(x) \sqcup \{p\} \subset \phi(y)$.

Regarding $\phi(y) \subset \phi(x) \sqcup \{p\}$:
For any $q \in \phi(y)$,
$q \leq \PF{y} = \PF{x} \vee p$.
Since $\scrP$ is distributive and $q$ is join-irreducible in $\scrP$
by lem.~\ref{lem:ji-prime},
$q \leq \PF{x}$ or $q \leq p$.
That is equivalent to $q \in \phi(x)$ or $q \leq p$.
Since $\phi(x) \sqcup \{p\}$ is an order ideal of $\scrP$ by hypothesis,
either case implies $q \in \phi(x) \sqcup \{p\}$.
Thus $\phi(y) \subset \phi(x) \sqcup \{p\}$.

Together, these show $\phi(y) = \phi(x) \sqcup \{p\}$.
By lem.~\ref{lem:raising}, $x < y$, and since $\phi(y)$ has exactly one more
element than $\phi(x)$, lem.~\ref{lem:diff-n} shows that $x \lessdot y$.
\end{proof}

\begin{lemma} \label{lem:delete-one}
Suppose $x \in \scrL$ and $p \in \scrP$ such that
$p \in \phi(x)$ and
$\phi(x) \setminus \{p\}$ is an order ideal of $\scrP$.
Then defining $p^\prime = \scrL \setminus p$,
$x \gtrdot x \Lower p^\prime$ and
$\phi(x \Lower p^\prime) = \phi(x) \setminus \{p\}$.
\end{lemma}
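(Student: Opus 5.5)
The plan is to dualize the proof of lem.~\ref{lem:add-one}, working in $\scrF^\prime$ and $\scrP^\prime$ and using the isomorphism $\omega$. Set $p^\prime = \scrL \setminus p$; by lem.~\ref{lem:filter-ideal} it is a prime lattice ideal, so $p^\prime \in \scrP^\prime$. Since $p \in \phi(x)$ we have $x \in p$, so $x \notin p^\prime$. Hence lem.~\ref{lem:lowering} applies: $y = x \Lower p^\prime$ exists, $y < x$, $y \in p^\prime$, and $\PI{y} = \PI{x} \cap p^\prime$. The goal is then to show $\phi(y) = \phi(x) \setminus \{p\}$, after which the covering $y \lessdot x$ follows from lem.~\ref{lem:diff-n}, exactly as at the end of lem.~\ref{lem:add-one}.

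\textbf{Inclusion $\phi(y) \subset \phi(x) \setminus \{p\}$.} From $y < x$ we get $\phi(y) \subset \phi(x)$. From $y \in p^\prime$ we get $y \notin p$, so $p \notin \phi(y)$.

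\textbf{Inclusion $\phi(x) \setminus \{p\} \subset \phi(y)$.} This is the substantive step. Take $q \in \phi(x)$ with $q \neq p$, and let $q^\prime = \scrL \setminus q \in \scrP^\prime$. We must show $y \in q$, i.e.\ $y \notin q^\prime$. Suppose instead $y \in q^\prime$. By lem.~\ref{lem:equiv} this means $\PI{y} \subset q^\prime$, i.e.\ $\PI{y} \leq q^\prime$ in $\scrF^\prime$. Since $\PI{y} = \PI{x} \wedge p^\prime$, and since $q^\prime$ is meet-irreducible in the distributive lattice $\scrF^\prime$ (lem.~\ref{lem:ji-prime} dual, lem.~\ref{lem:distributive}), hence meet-prime, we get $\PI{x} \leq q^\prime$ or $p^\prime \leq q^\prime$.

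We now rule out both cases. The first case means $x \in q^\prime$, contradicting $q \in \phi(x)$. The second case means $p^\prime \subset q^\prime$, so $q \subset p$, i.e.\ $p \leq q$ in $\scrP$ (inverse inclusion). Since $q \neq p$, this is $p < q$ with $q \in \phi(x) \setminus \{p\}$. But $\phi(x) \setminus \{p\}$ is an order ideal and $p \leq q$, so $p \in \phi(x) \setminus \{p\}$, which is absurd. Hence $y \in q$.

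The main obstacle I expect is the meet-primeness step. To justify it, I would either invoke the dual of the argument in lem.~\ref{lem:add-one} verbatim (meet-irreducible in a distributive lattice implies meet-prime), or prove it directly: if $x \notin q^\prime$ and $p^\prime \not\subset q^\prime$, pick $z \in p^\prime \setminus q^\prime$. Then $x, z \in q$, so $x \wedge z \in q$ because $q$ is a filter. Also $x \wedge z \in \PI{x} \cap p^\prime = \PI{y}$, so $x \wedge z \leq y$, giving $y \in q$. This direct version avoids the abstract lattice fact entirely and only uses that $q$ is a filter, so I would use it. Combining both inclusions with lem.~\ref{lem:diff-n}, the rank difference is one, and $x \gtrdot y$.
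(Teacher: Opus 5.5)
Your proof is correct and is essentially the paper's argument. It has the same two inclusions, the same dichotomy ``$x \in q^\prime$ or $p^\prime \subset q^\prime$'' coming from $\PI{y} = \PI{x} \cap p^\prime$, the same use of the order-ideal hypothesis to exclude $p < q$, and lem.~\ref{lem:diff-n} for the covering. Your elementwise justification of that dichotomy (choose $z \in p^\prime \setminus q^\prime$ and note $y \geq x \wedge z \in q$) is a valid replacement for the paper's appeal to meet-irreducibility of $q^\prime$ in the distributive lattice $\scrF^\prime$ (lem.~\ref{lem:ji-prime} and lem.~\ref{lem:distributive}). It is also more elementary, since it uses only that $q$ is a filter.
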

\begin{proof} \disconnect
(This proof is dual to the proof of lem.~\ref{lem:add-one} but that is
concealed by our notation, since $\phi$ is not self-dual.)

Define $p^\prime = \scrL \setminus p$ and $y = x \Lower p^\prime$.

We will show that $\phi(y) = \phi(x) \setminus \{p\}$.
Regarding $\phi(y) \subset \phi(x) \setminus \{p\}$:
Since $y < x$, $\phi(y) \subset \phi(x)$.
By construction $p \not\in \phi(y)$.
Thus $\phi(y) \subset \phi(x) \setminus \{p\}$.

Regarding $\phi(x) \setminus \{p\} \subset \phi(y)$:
We prove this by the contrapositive:
For any $q \not\in \phi(y)$, $y \not\in q$.
Define $q^\prime = \scrL \setminus q$, so $q^\prime$ is a lattice
ideal and $y \in q^\prime$.
Since $q$ is join-irreducible in $\scrF$ by lem.~\ref{lem:ji-prime},
$q^\prime$ is meet-irreducible in $\scrF^\prime$.
Then computing in $\scrF^\prime$,
$q^\prime \geq \PI{y} = \PI{x} \wedge p^\prime$.
Since $q^\prime$ is meet-irreducible and $\scrF^\prime$ is distributive,
$q^\prime \geq \PI{x}$ or $q^\prime \geq p^\prime$.
That is equivalent to $x \in q^\prime$ or $q^\prime \geq p^\prime$,
which is equivalent to $x \not\in q$ or $q \geq p$,
which is further equivalent to $q \not\in \phi(x)$ or $q \geq p$.
Since $\phi(x) \setminus \{p\}$ is an order ideal of
$\scrP$ by hypothesis,
either case implies $q \not\in \phi(x) \setminus \{p\}$.

By lem.~\ref{lem:lowering}, $y < x$, and since $\phi(y)$ has exactly one less
element than $\phi(x)$, lem.~\ref{lem:diff-n} shows that $y \lessdot x$.
\end{proof}

\begin{definition} \label{def:fin-diff}
Given $P \in \scrI$, we define
$$ D_P = \{ Q \in \scrI \mvert Q \Symdiff P \textup{ is a finite set} \}$$
where $\Symdiff$ is the symmetric difference operation on sets.
\end{definition}

\begin{lemma} \label{lem:DP-sublattice}
Given $P \in \scrI$, $D_P$ is a convex sublattice of $\scrI$.
\end{lemma}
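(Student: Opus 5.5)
We need to show that $D_P$ is closed under $\cap$ and $\cup$ (so it is a sublattice of $\scrI$, whose operations are $\cap$ and $\cup$ by def.~\ref{def:I}) and that it is convex. We also note that $D_P$ is non-empty because $P \in D_P$, since $P \Symdiff P = \emptyset$. The whole argument is elementary set algebra on symmetric differences. No lattice theory of $\scrL$ is needed beyond the fact that $\scrI$ is a lattice of sets under $\cap$ and $\cup$.

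\emph{Closure.} Let $Q, R \in D_P$. First I will recall that $\cap$ and $\cup$ of order ideals of $\scrP$ are again order ideals, so $Q \cap R$ and $Q \cup R$ lie in $\scrI$. Then I will establish the inclusions
$$ (Q \cap R) \Symdiff P \subset (Q \Symdiff P) \cup (R \Symdiff P), \qquad (Q \cup R) \Symdiff P \subset (Q \Symdiff P) \cup (R \Symdiff P). $$
These follow by elementwise case analysis. Take an element $a$ in the left side. Either $a \in P$ and $a$ is missing from $Q \cap R$, so $a \notin Q$ or $a \notin R$. Or $a \notin P$ and $a \in Q \cap R$, so $a \in Q$ (and also $a \in R$). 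In each case $a$ lies in $Q \Symdiff P$ or in $R \Symdiff P$. The union case is symmetric. The right-hand sides are finite, so both $Q \cap R$ and $Q \cup R$ are in $D_P$.

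\emph{Convexity.} Let $Q \subset S \subset R$ with $Q, R \in D_P$ and $S \in \scrI$. I will show $S \Symdiff P \subset (Q \Symdiff P) \cup (R \Symdiff P)$. Take $a \in S \setminus P$. Then $a \in R \setminus P$, so $a \in R \Symdiff P$. Take $a \in P \setminus S$. Then $a \notin Q$, so $a \in P \setminus Q \subset Q \Symdiff P$. Hence $S \Symdiff P$ is finite and $S \in D_P$.

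I do not expect a real obstacle here. The only point needing care is to make explicit that $\scrI$'s meet and join are set intersection and union, so that "sublattice" means closure under $\cap$ and $\cup$. Convexity is then just the observation that an intermediate set can only differ from $P$ where $Q$ or $R$ already do.
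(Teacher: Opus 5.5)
Your proof is correct and complete. The two inclusions $(Q \cap R) \Symdiff P \subset (Q \Symdiff P) \cup (R \Symdiff P)$ and $(Q \cup R) \Symdiff P \subset (Q \Symdiff P) \cup (R \Symdiff P)$ give closure under the operations of $\scrI$. For $Q \subset S \subset R$, the inclusion $S \Symdiff P \subset (Q \Symdiff P) \cup (R \Symdiff P)$ gives convexity.

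The paper states this lemma without proof, so there is no argument of its own to compare against. Your elementwise verification is exactly the routine check it leaves to the reader. An equivalent framing: finite symmetric difference is a lattice congruence on the subsets of $\scrP$. Then $D_P$ is the intersection of $\scrI$ with the congruence class of $P$, hence a convex sublattice. Your inclusions are precisely the compatibility check for that congruence.
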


\begin{theorem} \label{th:lf-repr}
Given $x \in \scrL$, $\phi$ is a lattice isomorphism between
$\scrL$ and $D_{\phi(x)}$.
\end{theorem}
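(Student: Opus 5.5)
The plan is to show three things about $\phi$: it maps $\scrL$ into $D_{\phi(x)}$, it is injective and preserves meets and joins, and it is surjective onto $D_{\phi(x)}$. Injectivity and preservation of $\wedge,\vee$ are already supplied by th.~\ref{th:phi-embed}, since $\phi$ is a lattice embedding of $\scrL$ into $\scrI$. Once we know $\phi(\scrL) = D_{\phi(x)}$, $\phi$ is a lattice isomorphism onto $D_{\phi(x)}$, which is a sublattice of $\scrI$ by lem.~\ref{lem:DP-sublattice}. Note also that $D_{\phi(x)} = D_{\phi(y)}$ whenever $\phi(x)\Symdiff\phi(y)$ is finite, because finite symmetric difference is an equivalence relation.

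\emph{Image lies in $D_{\phi(x)}$.} Take $y \in \scrL$ and compare it with $x$ through $x \wedge y$. Local finiteness makes the interval $[x\wedge y, x]$ finite, and lem.~\ref{lem:diff-n} then gives $|\phi(x)\setminus\phi(x\wedge y)| = \rho(x\wedge y,x) < \infty$. Likewise $\phi(y)\setminus\phi(x\wedge y)$ is finite. Since $\phi(x\wedge y)\subset \phi(x)\cap\phi(y)$, we get
\[
\phi(x)\Symdiff\phi(y) \subset (\phi(x)\setminus\phi(x\wedge y)) \cup (\phi(y)\setminus\phi(x\wedge y)),
\]
which is finite. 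Hence $\phi(y) \in D_{\phi(x)}$.

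\emph{Surjectivity.} Let $Q \in D_{\phi(x)}$. Put $P_0 = \phi(x)\cap Q$, an order ideal of $\scrP$ with $\phi(x)\setminus P_0$ and $Q\setminus P_0$ both finite. The proof proceeds in two stages.
\begin{enumerate}
\item \emph{Downward stage.} Starting from $x$, repeatedly delete a maximal element of $\phi(x)\setminus P_0$. Removing a maximal element $p$ of the finite set $\phi(x)\setminus P_0$ from $\phi(x)$ leaves an order ideal, because any $q \geq p$ with $q \in \phi(x)$ would lie in $\phi(x)\setminus P_0$ (as $P_0$ is down-closed). So lem.~\ref{lem:delete-one} applies and produces an element whose $\phi$ is $\phi(x)\setminus\{p\}$. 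Iterating finitely many times gives $z$ with $\phi(z) = P_0$.
\item \emph{Upward stage.} From $z$, repeatedly add a minimal element $p$ of the finite set $Q\setminus\phi(z)$. Then $\phi(z)\sqcup\{p\}$ is an order ideal: any $q<p$ lies in $Q$, and by minimality $q$ cannot lie in $Q\setminus\phi(z)$, so $q\in\phi(z)$. Hence lem.~\ref{lem:add-one} applies. After finitely many steps we obtain $w$ with $\phi(w) = Q$.
\end{enumerate}

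The main obstacle is verifying at each step the order-ideal hypotheses of lem.~\ref{lem:add-one} and lem.~\ref{lem:delete-one}. This is the reason to choose maximal elements when deleting and minimal elements when adding, which is possible because the relevant differences are finite. The order matters: first lower to the intersection, then raise, so that every intermediate set is an order ideal sandwiched appropriately. Everything else reduces to lem.~\ref{lem:diff-n} and th.~\ref{th:phi-embed}.
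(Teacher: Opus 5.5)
Your proposal is correct and follows essentially the same route as the paper: th.~\ref{th:phi-embed} gives the embedding, and finiteness of $\phi(x)\Symdiff\phi(y)$ comes from covering steps (you go through $x\wedge y$ with lem.~\ref{lem:diff-n}, while the paper walks a path of coverings with lem.~\ref{lem:diff-one}). Surjectivity then comes from changing one prime at a time via lem.~\ref{lem:add-one} and lem.~\ref{lem:delete-one}; your explicit passage through $\phi(x)\cap Q$ (deleting maximal elements, then adding minimal ones) supplies the justification the paper omits, since it only asserts that a one-element-at-a-time sequence of \emph{order ideals} from $\phi(x)$ to $Y$ exists.
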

\begin{proof} \disconnect
By th.~\ref{th:phi-embed}, $\phi$ is a lattice embedding of
$\scrL$ into $\scrI$, so
what remains to be shown is that the image of $\phi$ is $D_{\phi(x)}$.

If $y \in \scrL$, then since $\scrL$ is locally-finite, there exists a
finite sequence of elements of $\scrL$,
$x = z_0, z_1, z_2, \ldots, z_n = y$ such that for every $0 \leq i < n$,
either $z_i$ covers $z_{i+1}$ or vice-versa.
By lem.~\ref{lem:diff-one}, this means that
$|\phi(z_{i+1}) \Symdiff \phi(z_i)| = 1$,
so $|\phi(x) \Symdiff \phi(y)| \leq n$, and so $\phi(y) \in D_{\phi(x)}$.

Conversely, if $Y \in D_{\phi(x)}$, then there is a
finite sequence of subsets of $\scrI$,
$\phi(x) = Z_0, Z_1, Z_2, \ldots, Z_n = Y$ such that for every $0 \leq i < n$,
$|Z_i \Symdiff Z_{i+1}| = 1$.
By lem.~\ref{lem:add-one} and~\ref{lem:delete-one},
this means that every $Z_i = \phi(z_i)$ for some $z_i \in \scrL$.
\end{proof}

Thus, if we look at $\scrI$ as a graph whose vertices are the elements
of $\scrI$  with two
elements being connected iff one covers the other, $\scrL$ is
isomorphic to one connected component of $\scrI$.

In the example $\scrL = \mbbZ \times \mbbZ$ in
fig.~\ref{fig:ZxZideals}, $\scrL$ is isomorphic to the central
connected component, and there are eight other connected components.
In this case, if we chose any of the eight other connected
components as $\scrL_1$, the corresponding $\scrP_1$ would be a proper
subset of $\scrP$, and $\scrI_1$ would be a proper subset of this $\scrI$.

However, other examples behave differently.  If we choose
$\scrL = \Bfin$, the lattice of finite subsets of $\mbbN$, then the
corresponding $\scrP$ is an antichain with $\aleph_0$ elements,
with each prime filter
being all the finite sets containing some particular integer
and each prime filter is a principal prime.
The corresponding $\scrI$ is isomorphic to $\mbbB_\mbbN$, the lattice
of \emph{all} subsets of $\mbbN$.  Then $\scrL$ is isomorphic to the
\emph{bottom} connected component of $\scrI$, containing its
$\zerohat = \emptyset$.

The top connected component of $\scrI$ contains its $\onehat = \scrP$ and
is isomorphic to
$\Bcofin$, the lattice of all cofinite subsets of $\mbbN$, which is
the lattice dual of $\Bfin$.
All other connected components are isomorphic to
$\Bfin \times \Bcofin$.  And since all components are countable but
$\mbbB_\mbbN$ is uncountable, there are an uncountable number of
connected components of $\scrI$.

If we chose $\scrL_1$ to be any one of these components other than
$\Bfin$, then $\scrP_1$
would be an antichain with $\aleph_0$ elements, but its elements
would all be secondary primes.
The corresponding $\scrI_1$ would be isomorphic to $\scrI$.

\section{Criterion for secondary primes}

Given a locally-finite lattice, if all of its primes are principal,
they correspond to the join-irreducible elements of $\scrL$, and so the
structure of $\scrP$ is relatively easy to determine.  But if a
prime is secondary, it has no single corresponding element of $\scrL$
and so determining the structure of $\scrP$ is more difficult.  We
will now show how to distinguish these two cases.

\begin{definition} \cite{Birk1967a}*{\S I.7} \label{def:transpose}
Given two coverings $x^- \lessdot x^+$ and $y^- \lessdot y^+$ in $\scrL$,
we define that
$y^- \lessdot y^+$ is a \emph{downward transpose} of $x^- \lessdot x^+$,
in symbols $\transpose{x}{y}$, if
$x^+ = x^- \vee y^+$ and $y^- = x^- \wedge y^+$.
\end{definition}

Downward transposes are transitive:

\begin{lemma} \label{lem:trans-trans}
If $\transpose{x}{y}$ and
$\transpose{y}{z}$, then
$\transpose{x}{z}$.
\end{lemma}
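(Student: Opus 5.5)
We have to show $x^+ = x^- \vee z^+$ and $z^- = x^- \wedge z^+$, given $x^+ = x^- \vee y^+$, $y^- = x^- \wedge y^+$, $y^+ = y^- \vee z^+$ and $z^- = y^- \wedge z^+$. The plan is to verify both identities directly from the lattice laws (associativity, absorption), rewriting $y^+$ and $y^-$ in terms of the other elements. No covering information should be needed, since the coverings $x^- \lessdot x^+$ and $z^- \lessdot z^+$ are part of the hypotheses. So the whole content is these two equalities.

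\emph{The join identity.} Substitute $y^+ = y^- \vee z^+$ into $x^+ = x^- \vee y^+$ to get $x^+ = x^- \vee y^- \vee z^+$. Since $y^- = x^- \wedge y^+ \leq x^-$, the term $y^-$ is absorbed by $x^-$. Hence $x^+ = x^- \vee z^+$.

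\emph{The meet identity.} Substitute $y^- = x^- \wedge y^+$ into $z^- = y^- \wedge z^+$ to get $z^- = x^- \wedge y^+ \wedge z^+$. Since $z^+ \leq y^- \vee z^+ = y^+$, we have $y^+ \wedge z^+ = z^+$. Hence $z^- = x^- \wedge z^+$.

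Together these give $\transpose{x}{z}$. The only point needing care, and it is slight, is noticing the two order relations $y^- \leq x^-$ and $z^+ \leq y^+$, which are immediate from the defining equations of the given transposes. Distributivity is not even required.
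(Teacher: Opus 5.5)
Your argument is correct and complete. Substituting $y^+ = y^- \vee z^+$ and $y^- = x^- \wedge y^+$, then absorbing via $y^- \leq x^-$ and $z^+ \leq y^+$, yields both defining identities. The coverings $x^- \lessdot x^+$ and $z^- \lessdot z^+$ are already hypotheses, so nothing else (in particular no distributivity) is needed. The paper states this lemma without proof, so your direct verification is the routine check it leaves to the reader.
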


Downward transposes are directed:

\begin{lemma} \label{lem:trans-dir}
If $\transpose{x}{y}$ and
$\transpose{x}{z}$, then
defining $w^- = y^- \wedge z^-$ and $w^+ = y^+ \wedge z^+$,
$w^- \lessdot w^+$,
$\transpose{y}{w}$, and
$\transpose{z}{w}$.
\end{lemma}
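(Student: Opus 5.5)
We argue directly in the distributive lattice $\scrL$, using only the defining identities of Def.~\ref{def:transpose}. The hypotheses are
\[
x^+ = x^- \vee y^+,\quad y^- = x^- \wedge y^+,\quad x^+ = x^- \vee z^+,\quad z^- = x^- \wedge z^+ .
\]
Set $w^- = y^- \wedge z^-$ and $w^+ = y^+ \wedge z^+$. We must establish $w^+ = y^- \vee w^+$ and $w^- = y^- \wedge w^+$, which gives $\transpose{y}{w}$; the symmetric identities give $\transpose{z}{w}$. We must also prove the covering $w^- \lessdot w^+$.

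\textbf{Identities.}
First, $y^- \wedge z^- = x^- \wedge y^+ \wedge z^+ = x^- \wedge w^+$, so $w^- = x^- \wedge w^+$.
\begin{itemize}
\item[(a)] Since $w^+ \le y^+$, we get $y^- \wedge w^+ = x^- \wedge y^+ \wedge w^+ = x^- \wedge w^+ = w^-$.
\item[(b)] By distributivity,
\[
y^- \vee w^+ = (x^- \wedge y^+) \vee (y^+ \wedge z^+) = y^+ \wedge \bigl(x^- \vee (y^+ \wedge z^+)\bigr) = y^+ \wedge (x^- \vee y^+) \wedge (x^- \vee z^+) = y^+ \wedge x^+ \wedge x^+ = y^+ .
\]
\end{itemize}
So $y^- \vee w^+ = y^+$ and $y^- \wedge w^+ = w^-$, which is exactly $\transpose{y}{w}$ once the covering is known. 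The case of $z$ is symmetric.

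\textbf{Covering.} This is the main point. In a distributive (indeed modular) lattice, the maps $u \mapsto u \wedge w^+$ and $v \mapsto v \vee y^-$ are inverse isomorphisms between the intervals $[y^-, y^+] = [y^-, y^- \vee w^+]$ and $[y^- \wedge w^+, w^+] = [w^-, w^+]$. This is the diamond isomorphism theorem, which one can verify from distributivity in one line: $(v \vee y^-) \wedge w^+ = v \vee (y^- \wedge w^+) = v$ for $w^- \le v \le w^+$, and dually. Since $[y^-, y^+]$ is the two-element interval, $[w^-, w^+]$ has exactly two elements.

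The one subtlety is ruling out $w^- = w^+$. If the two were equal, the isomorphism would force $y^- = y^+$, contradicting $y^- \lessdot y^+$. Hence $w^- \lessdot w^+$.

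I expect the only real care needed is applying the diamond isomorphism with the correct endpoints. The identities in (a) and (b) are routine distributive-law calculations.
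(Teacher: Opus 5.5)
Your proof is correct and is essentially the paper's argument. The same distributive-law computations give $y^- \wedge w^+ = w^-$ and $y^- \vee w^+ = y^+$, hence $\transpose{y}{w}$ and symmetrically $\transpose{z}{w}$, and the diamond isomorphism between $[y^-, y^+]$ and $[w^-, w^+]$ then yields $w^- \lessdot w^+$. One slip: in your setup the first target identity should read $y^+ = y^- \vee w^+$, not $w^+ = y^- \vee w^+$; the correct identity is what you actually prove in (b).
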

\begin{proof} \disconnect
By hypothesis, $y^- = y^+ \wedge x^-$, $z^- = z^+ \wedge x^-$,
$x^+ = x^- \vee z^+$, and $y^+ \leq x^+$.
Then
$y^- \wedge w^+ = y^- \wedge (y^+ \wedge z^+)
= (y^+ \wedge x^-) \wedge (y^+ \wedge z^+)
= (y^+ \wedge x^-) \wedge (z^+ \wedge x^-)
= y^- \wedge z^- = w^-$
and
$y^- \vee w^+
= (y^+ \wedge x^-) \vee (y^+ \wedge z^+)
= y^+ \wedge (x^- \vee z^+)
= y^+ \wedge x^+
= y^+$.
Together, these show $\transpose{y}{w}$.
Similarly, we show $\transpose{z}{w}$.
Applying the diamond transposition theorem to either downward
transpose shows that $w^- \lessdot w^+$.
\end{proof}

Downward transposes preserve separators (def.~\ref{def:sep}):

\begin{lemma} \label{lem:trans-pres}
If $\transpose{x}{y}$ then
$x^+ \sslash x^- = y^+ \sslash y^-$.
\end{lemma}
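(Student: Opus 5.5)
Write $p = x^- \sslash x^+$; this is what the statement's $x^+ \sslash x^-$ means, since by def.~\ref{def:sep} the separator is the unique element of $\phi(x^+) \setminus \phi(x^-)$. The plan is to show directly that $p$ lies in $\phi(y^+) \setminus \phi(y^-)$. By lem.~\ref{lem:diff-one} that set has exactly one element, namely $y^- \sslash y^+$, so the two separators must coincide. The hypothesis $\transpose{x}{y}$ gives two identities: $x^+ = x^- \vee y^+$ and $y^- = x^- \wedge y^+$.

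First, $p \notin \phi(y^-)$. We know $y^- \leq x^-$, and $x^- \notin p$. Since $p$ is an order filter, $y^- \in p$ would force $x^- \in p$. So $y^- \notin p$, i.e.\ $p \notin \phi(y^-)$ by lem.~\ref{lem:equiv-phi}.

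Second, $p \in \phi(y^+)$, which is the only step that uses primality. We have $x^- \vee y^+ = x^+ \in p$. Since $p$ is a prime lattice filter (def.~\ref{def:pfilter}), either $x^- \in p$ or $y^+ \in p$. The first is excluded by the choice of $p$, so $y^+ \in p$, giving $p \in \phi(y^+)$.

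Hence $p \in \phi(y^+) \setminus \phi(y^-)$. This set is the singleton $\{y^- \sslash y^+\}$ because $y^- \lessdot y^+$ is a covering, which is part of the hypothesis of def.~\ref{def:transpose}. Therefore $x^+ \sslash x^- = y^+ \sslash y^-$. There is no real obstacle here. The only care needed is noting that the identity $y^- = x^- \wedge y^+$ is used only through the inequality $y^- \leq x^-$, and that uniqueness of the separator comes entirely from lem.~\ref{lem:diff-one}.
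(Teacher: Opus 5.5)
Your proof is correct and is essentially the paper's own argument: primality of $p$ applied to $x^+ = x^- \vee y^+$ forces $y^+ \in p$, and upward closure together with $y^- \leq x^-$ forces $y^- \notin p$. The uniqueness of the separator from lem.~\ref{lem:diff-one} then identifies $p$ with the separator of $y^- \lessdot y^+$.
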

\begin{proof} \disconnect
We know that $\phi(x^+) \setminus \phi(x^-)$ and
$\phi(y^+) \setminus \phi(y^-)$ each contain exactly one element.
Define $p = x^+ \sslash x^- = \phi(x^+) \setminus \phi(x^-)$, so
$p \not\in \phi(x^-)$, $x^- \not\in p$, and
$x^- \vee y^+ = x^+ \in p$.
Since $p$ is a prime filter, either $x^- \in p$ or $y^+ \in p$.
But by hypothesis, $x^- \not\in p$, so $y^+ \in p$.
Since $x^- \not\in p$ by hypothesis and $y^- \leq x^-$,
$y^- \not \in p$.
Thus $y \in \phi(y^+) \setminus \phi(y^-)$, so $p = y^+ \sslash y^-$.
\end{proof}

\begin{theorem} \label{th:down-step}
If $x^- \lessdot x^+$ and $p = x^+ \sslash x^-$, then
\begin{enumerate}
\item if $p$ is principal and $p = \PF{x^+}$, then there is no
  $\transpose{x}{y}$ with $x^+ > y^+$.
\item otherwise, there exists a $y^- \lessdot y^+$ with $x^+ > y^+$ and
  $\transpose{x}{y}$.
\end{enumerate}
\end{theorem}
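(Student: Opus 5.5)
Both parts come from one observation: a downward transpose of $x^- \lessdot x^+$ with $y^+ < x^+$ exists exactly when $x^+$ is join-reducible. By lem.~\ref{lem:trans-pres}, any downward transpose $\transpose{x}{y}$ has the same separator $p$, so $y^+ \in p$ and $p \leq \PF{y^+}$. Note also that $y^+ \leq x^+$ always holds, since $x^+ = x^- \vee y^+$.

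For part (1), assume $p = \PF{x^+}$ and suppose there were a downward transpose with $y^+ < x^+$. Then $y^+ \in p = \PF{x^+}$ gives $x^+ \leq y^+$, a contradiction. So no such transpose exists.

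For part (2), the case ``$p = \PF{x^+}$'' fails. I first claim that $x^+$ is then not join-irreducible. Suppose $x^+$ were join-irreducible. By lem.~\ref{lem:PF-irred}, $\PF{x^+}$ is a prime filter. We have $x^+ \in \PF{x^+}$ and $x^- \notin \PF{x^+}$, so $\PF{x^+} \in \phi(x^+) \setminus \phi(x^-)$. By lem.~\ref{lem:diff-one} this set has exactly one element, so $\PF{x^+} = x^+ \sslash x^- = p$, contrary to assumption. Hence $x^+$ is join-reducible. By lem.~\ref{lem:cover-one} (contrapositive of its converse), $x^+$ covers some element $z \neq x^-$.

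Now I set $y^+ = z$ and $y^- = x^- \wedge z$, and check the required properties.
\begin{itemize}
\item Since $x^-$ and $z$ are distinct lower covers of $x^+$, we get $x^- \vee z = x^+$. Together with $y^- = x^- \wedge y^+$, this gives both conditions of def.~\ref{def:transpose}.
\item We have $y^+ = z < x^+$.
\item To see $y^- \lessdot y^+$, I use that $\scrL$ is distributive, hence modular. The diamond isomorphism theorem (the same tool used at the end of lem.~\ref{lem:trans-dir}) gives that $[x^- \wedge z, z]$ is isomorphic to $[x^-, x^- \vee z] = [x^-, x^+]$, which is a two-element interval. Alternatively, lem.~\ref{lem:diff-n} gives the same conclusion by a rank count.
\end{itemize}

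The main obstacle is the step showing that $x^+$ is join-reducible when $p \neq \PF{x^+}$. It hinges on identifying $\PF{x^+}$ with the separator via the uniqueness in lem.~\ref{lem:diff-one}; the rest is routine modular-lattice bookkeeping.
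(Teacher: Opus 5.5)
Your proof is correct and follows the paper's argument. Part (1) uses lem.~\ref{lem:trans-pres} to force $y^+ \in \PF{x^+}$. Part (2) takes a second lower cover $z$ of $x^+$, sets $y^- = x^- \wedge z$, and applies the diamond isomorphism theorem. Your explicit argument that $x^+$ has a lower cover other than $x^-$ supplies a step the paper only asserts: if $x^+$ were join-irreducible, then $\PF{x^+} \in \phi(x^+)\setminus\phi(x^-) = \{p\}$, contradicting $p \neq \PF{x^+}$.
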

\begin{proof} \disconnect
Regarding (1):
If there was such a $y^- \lessdot y^+$, then by
lem.~\ref{lem:trans-pres}, $p = y^+ \sslash y^-$ and $y^+ \in p$.
But that would mean that $y^+ \geq x^+$, but by hypothesis
$x^+ > y^+$.

Regarding (2):
If $p$ is not principal, or $p \neq \PF{x^+}$, then there exists a
$y^+ \lessdot x^+$, and necessarily $y^+ \neq x^-$.
Define $y^- = y^+ \wedge x^-$.
Then by the diamond isomorphism theorem, $y^- \lessdot y^+$
and $\transpose{x}{y}$.
\end{proof}

\begin{definition} \label{def:chain}
We define a \emph{chain} of downward transposes to be a sequence
$x^-_1 \lessdot x^+_1, x^-_2 \lessdot x^+_2, x^-_3 \lessdot x^+_3, \ldots,
x^-_n \lessdot x^+_n$ with
$x^+_1 > x^+_2 > x^+_3 > \cdots > x^+_n$
(or equivalently
$x^-_1 > x^-_2 > x^-_3 > \cdots > x^-_n$)
and $\transpose{x_1}{x_2}$, $\transpose{x_2}{x_3}$, $\ldots$,
$\transpose{x_{n-1}}{x_n}$.
\end{definition}

\begin{corollary} \label{cor:chain}
If $x^- \lessdot x^+$ and $p = x^+ \sslash x^-$, then
\begin{enumerate}
\item if $p$ is principal, then any chain of downward transposes of
starting with $x^- \lessdot x^+$ can only be extended to some finite
maximum length.
\item if $p$ is secondary, then all chains of downward transposes
starting with $x^- \lessdot x^+$ can be extended indefinitely.
\end{enumerate}
\end{corollary}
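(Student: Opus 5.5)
Both parts follow from th.~\ref{th:down-step} combined with lem.~\ref{lem:trans-pres}. The key observation is that every covering in a chain of downward transposes starting at $x^- \lessdot x^+$ has the same separator $p$. Indeed, by lem.~\ref{lem:trans-trans} each $x_i^- \lessdot x_i^+$ is a downward transpose of $x_1^- \lessdot x_1^+$, so lem.~\ref{lem:trans-pres} gives $x_i^+ \sslash x_i^- = p$ for all $i$. In particular, $x_i^+ \in p$ and $x_i^- \not\in p$ for every $i$.

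For (2), suppose $p$ is secondary and we are given a finite chain ending at $x_n^- \lessdot x_n^+$. Its separator is $p$. Since $p$ is secondary, it is not of the form $\PF{z}$ for any $z$; in particular $p \neq \PF{x_n^+}$. Case (2) of th.~\ref{th:down-step} then applies to $x_n^- \lessdot x_n^+$ and produces $y^- \lessdot y^+$ with $x_n^+ > y^+$ and $\transpose{x_n}{y}$. Appending this covering gives a longer chain, which again has separator $p$. By induction, every chain extends indefinitely.

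For (1), suppose $p = \PF{z}$ for some join-irreducible $z$. Every $x_i^+$ lies in $p$, so $z \leq x_i^+$ for all $i$. Hence $z \leq x_n^+ < \cdots < x_1^+ = x^+$ is a strictly descending sequence inside the interval $[z, x^+]$. Local finiteness makes this interval finite, so $n \leq |[z,x^+]|$, and every chain has length bounded independently of the chain. Equivalently, by lem.~\ref{lem:diff-n} the length is at most $\rho(z,x^+)+1$.

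We also note how a chain terminates in case (1). At a maximal chain, case (1) of th.~\ref{th:down-step} must hold for its last covering, so $x_n^+ = z$. Thus the chain can always be extended until it reaches a covering whose top is $z$, and no further. The only subtle point is justifying that the separator is invariant along the chain, and lem.~\ref{lem:trans-trans} together with lem.~\ref{lem:trans-pres} handles this.
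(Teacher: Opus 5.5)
Your proof is correct and is essentially the argument the paper intends. The paper states this corollary after th.~\ref{th:down-step} without writing out a proof, and you supply the implicit details: lem.~\ref{lem:trans-pres} keeps the separator equal to $p$ along the chain, so case (2) of th.~\ref{th:down-step} always applies when $p$ is secondary, and when $p = \PF{z}$ all tops $x_i^+$ lie in the finite interval $[z, x^+]$, which bounds the length.
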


Cor.~\ref{cor:chain} can be used to discern which prime filters are
primary and which are secondary.  For example, in
fig.~\ref{fig:ZxZchain} all chains of downward transposes can be
extended indefinitely, so all primes are secondary.

But in fig.~\ref{fig:SSchain} all chains of downward transposes must
end, showing all primes are principal.
The upper elements of the coverings on which chains are
forced to end are the join-irreducible elements of $\scrL$, and
collectively they are
are isomorphic to $\scrP$.  $\scrI$ has three connected components,
one is $\emptyset$, one is $\scrP$, and one is the remaining ideals of
$\scrP$, all of which are infinite and have complements that are infinite.
The third connected component is isomorphic to this lattice.

\begin{figure}[ht]
\centering
\begin{subfigure}{0.45\textwidth}
  \centering
  \includegraphics[page=\ipeFigZxZchain]{locally-finite-figs.pdf}%
  \caption{The lattice $\mbbZ \times \mbbZ$ with a chain of downward
    transposes}
  \label{fig:ZxZchain}
\end{subfigure}
\begin{subfigure}{0.45\textwidth}
  \centering
  \includegraphics[page=\ipeFigSSchain]{locally-finite-figs.pdf}%
  \caption{The lattice $4$-SkewStrip with several chains of downward
    transposes}
  \label{fig:SSchain}
\end{subfigure}
\caption{}
\end{figure}

\section{Unique connected component of ideals}

The examples we have been using, $\mbbZ\times\mbbZ$ and $\Bfin$, share
the property that their lattice of ideals $\scrI$ has multiple
connected components, one of which is isomorphic the the lattice that
generates it.
However, our long-term goal is characterizing lattices which are useful
in constructing RSK algorithms.  Examples of these lattices are
$t$-SkewStrip in fig.~\ref{fig:Plait-SS} and the lattices listed in
\cite{Wor2026b}*{sec.~2.2}.
All of those lattices have the
property that that their lattice of ideals $\scrI$ has at most three
components:
\begin{itemize}
\item one component that is isomorphic to the generating
  lattice $\scrL$;
\item if $\scrL$ has no $\onehat$, one component containing
  only $\scrP$; and
\item if $\scrL$ has no $\zerohat$, one component
containing only $\emptyset$.
\end{itemize}
Looking at these examples suggests:

\begin{conjecture}
A locally-finite distributive lattice has a lattice of ideals of prime
filters that contains at most three connected components (one
isomorphic to the lattice, perhaps one containing the empty ideal, and
perhaps one containing the ideal of all primes) iff
the lattice has finite width  (contains no infinite
antichains).
\end{conjecture}

\vspace{3em}


\section*{References}

\begin{biblist}[\normalsize]*{labels={alphabetic}}



\DefineSimpleKey{bib}{identifier}{}
\DefineSimpleKey{bib}{location}{}
\DefineSimpleKey{bib}{primaryclass}{}
\gdef\MR#1{\relax\ifhmode\unskip\spacefactor3000 \space\fi
  \href{https://mathscinet.ams.org/mathscinet-getitem?mr=#1}{MR#1}}
\gdef\Zbl#1{\relax\ifhmode\unskip\spacefactor3000 \space\fi
  \href{https://zbmath.org/#1}{Zbl~#1}}
\gdef\GS#1{\relax\ifhmode\unskip\spacefactor3000 \space\fi
  \href{https://scholar.google.com/scholar?cluster=#1}{GS~#1}}
\gdef\ORCID#1{\relax\ifhmode\unskip\spacefactor3000 \space\fi
  \href{https://arxiv.org/search/?query=#1&searchtype=orcid&abstracts=hide&order=-announced_date_first&size=50}{ar$\chi$iv}
  \href{https://orcid.org/orcid-search/search?searchQuery=#1}{ORCID:#1}}


\BibSpec{arXiv}{%
    +{}{\PrintAuthors}                  {author}
    +{,}{ \textit}                      {title}
    +{,} { \PrintTranslatorsC}          {translator}
    +{}{ \parenthesize}                 {date}
    +{,}{ arXiv }                       {identifier}
    +{,}{ primary class }               {primaryclass}
    +{,} { \PrintDOI}                   {doi}
    +{,} { available at \eprint}        {eprint}
    +{.} { }                            {note}
}

\BibSpec{article}{%
    +{}  {\PrintAuthors}                {author}
    +{,} { \textit}                     {title}
    +{.} { }                            {part}
    +{:} { \textit}                     {subtitle}
    +{,} { \PrintTranslatorsC}          {translator}
    +{,} { \PrintContributions}         {contribution}
    +{.} { \PrintPartials}              {partial}
    +{,} { }                            {journal}
    +{}  { \textbf}                     {volume}
    +{}  { \PrintDatePV}                {date}
    +{,} { \issuetext}                  {number}
    +{,} { \eprintpages}                {pages}
    +{,} { }                            {status}
    +{,} { \PrintDOI}                   {doi}
    +{,} { available at \eprint}        {eprint}
    +{}  { \parenthesize}               {language}
    +{}  { \PrintTranslation}           {translation}
    +{;} { \PrintReprint}               {reprint}
    +{.} { }                            {note}
    +{.} {}                             {transition}
    +{}  {\SentenceSpace \PrintReviews} {review}
}

\BibSpec{partial}{%
    +{}  {}                             {part}
    +{:} { \textit}                     {subtitle}
    +{,} { \PrintContributions}         {contribution}
    +{,} { }                            {journal}
    +{}  { \textbf}                     {volume}
    +{}  { \PrintDatePV}                {date}
    +{,} { \issuetext}                  {number}
    +{,} { \eprintpages}                {pages}
    +{,} { \PrintDOI}                   {doi}
    +{,} { available at \eprint}        {eprint}
    +{.} { }                            {note}
}

\BibSpec{presentation}{%
    +{}{\PrintAuthors}                  {author}
    +{,}{ \textit}                      {title}
    +{,}{ }                             {date}
    +{,}{ }                             {location}
    +{,}{ }                             {series}
    +{,} { \PrintDOI}                   {doi}
    +{,} { available at \eprint}        {eprint}
    +{.} { }                            {note}
}

\BibSpec{misc}{%
    +{}  {\PrintPrimary}                {transition}
    +{,} { \textit}                     {title}
    +{.} { }                            {part}
    +{:} { \textit}                     {subtitle}
    +{,} { \PrintEdition}               {edition}
    +{}  { \PrintEditorsB}              {editor}
    +{,} { \PrintTranslatorsC}          {translator}
    +{,} { \PrintContributions}         {contribution}
    +{,} { }                            {organization}
    +{,} { }                            {address}
    +{,} { \PrintDateB}                 {date}
    +{,} { }                            {status}
    +{}  { \parenthesize}               {language}
    +{}  { \PrintTranslation}           {translation}
    +{;} { \PrintReprint}               {reprint}
    +{,} { \PrintDOI}                   {doi}
    +{,} { available at \eprint}        {eprint}
    +{.} { }                            {note}
    +{.} {}                             {transition}
    +{}  {\SentenceSpace \PrintReviews} {review}
}

\bib*{xref-Berk2024a}{book}{
  editor={Berkesch, Christine},
  editor={Musiker, Gregg},
  editor={Pylyavskyy, Pavlo},
  editor={Reiner, Victor},
  title={Open problems in algebraic combinatorics},
  date={2024},
  publisher={AMS},
  address={Providence, RI, US},
  series={Proc.\ of Symposia in Pure Mathematics},
  volume={110},
  doi={10.1090/pspum/110},
  eprint={https://www.ams.org/books/pspum/110/},
}

\bib*{xref-BogFreesKung1990a}{book}{
  title={The Dilworth theorems: Selected papers of Robert P.\ Dilworth},
  editor={Bogard, Kenneth P.},
  editor={Freese, Ralph S.},
  editor={Kung, Joseph P.~S.},
  date={1990},
  publisher={Springer},
  address={New York},
  series={Contemporary Mathematicians},
  doi={10.1007/978-1-4899-3558-8},
}

\bib*{xref-Stan1999a}{book}{
  title={Enumerative Combinatorics, Volume 2},
  author={Stanley, Richard P.},
  date={1999},
  publisher={Cambridge University Press},
  address={Cambridge},
  series={Cambridge Studies in Advanced Mathematics},
  volume={62},
}

\bib*{xref-Stant1990a}{book}{
  title={Invariant Theory and Tableaux},
  editor={Stanton, Dennis},
  publisher={Springer-Verlag},
  series={IMA Volumes in Math. and Its Appls.},
  volume={19},
  address={Berlin and New York},
  date={1990},
}

\bib{Birk1937a}{article}{
  label={Birk1937},
  author={Birkhoff, Garrett},
  title={Rings of sets},
  journal={Duke Math.~J.},
  volume={3},
  date={1937},
  pages={443--454},
  review={\MR {1546000}},
  doi={10.1215/S0012-7094-37-00334-X},
  eprint={https://projecteuclid.org/journals/duke-mathematical-journal/volume-3/issue-3/Rings-of-sets/10.1215/S0012-7094-37-00334-X.short},
  note={\GS {10180976689018188837}},
}

\bib{Birk1967a}{book}{
  label={Birk1967},
  author={Birkhoff, Garrett},
  title={Lattice theory},
  edition={3},
  date={1967},
  publisher={American Mathematical Society},
  address={Providence},
  series={American Mathematical Society Colloquium Publications},
  volume={25},
  review={\MR {227053} \Zbl {0153.02501}},
  eprint={https://archive.org/details/latticetheory0000birk},
  note={Original edition 1940. \GS {10180976689018188837}},
}

\bib{Fom1994a}{article}{
  label={Fom1994},
  author={Fomin, Sergey V.},
  title={Duality of Graded Graphs},
  journal={J.~Algebr.\ Comb.},
  volume={3},
  date={1994},
  pages={357--404},
  review={\MR {1293822} \Zbl {0810.05005}},
  doi={10.1023/A:1022412010826},
  eprint={https://link.springer.com/content/pdf/10.1023/A:1022412010826.pdf},
  note={\GS {3401296478290474488}},
}

\bib{Nat2017a}{book}{
  label={Nat2017},
  title={Notes on Lattice Theory},
  author={Nation, James Bryant},
  date={2017},
  eprint={http://math.hawaii.edu/~jb/lattice2017.pdf},
  note={\GS {2514237753731863273} Links to individual chapters are in \url {https://math.hawaii.edu/~jb/books.html}},
}

\bib{Stan2012a}{book}{
  label={Stan2012},
  title={Enumerative Combinatorics, Volume 1},
  edition={2},
  author={Stanley, Richard P.},
  date={1997, 2012},
  publisher={Cambridge University Press},
  address={Cambridge},
  series={Cambridge Studies in Advanced Mathematics},
  volume={49},
  note={original edition 1997.},
}

\bib{Stone1938a}{article}{
  label={Stone1938},
  author={Stone, Marshall Harvey},
  title={Topological representations of distributive lattices and Brouwerian logics},
  journal={\v {C}as.\ Mat.\ Fys.},
  volume={67},
  date={1938},
  pages={1--25},
  review={\Zbl {0018.00303}},
  eprint={https://eudml.org/doc/27235},
  note={\GS {13361446076336132270}},
}

\bib{Wor2026b}{arXiv}{
  label={Wor2026},
  author={Worley, Dale R.},
  title={On the combinatorics of tableaux --- Classification of lattices underlying Schensted correspondences},
  date={2026},
  identifier={2511.07611},
  primaryclass={math.CO},
  doi={10.48550/arXiv.2511.07611},
  eprint={https://arxiv.org/abs/2511.07611},
}


\end{biblist}
\end{document}